\newtheorem{definition}{Definition}
\newtheorem{theorem}{Theorem}
\newtheorem{lemma}{Lemma}
\newtheorem{corollary}{Corollary}
\newtheorem{proposition}{Proposition}
\newtheorem{remark}{Remark}
\newtheorem{example}{Example}
\newtheorem{conjecture}{Conjecture}
\newcommand{\R}{\mathbb{R}}
\numberwithin{equation}{section}
\DeclareMathOperator{\rank}{Range}
\title[]{The dichotomy spectrum approach for a global nonuniform asymptotic stability problem: Triangular case via uniformization}
\author[]{\'Alvaro Casta\~neda}
\author[]{Ignacio Huerta}
\author[]{Gonzalo Robledo}
\address{Universidad de Chile, Departamento de Matem\'aticas. Casilla 653, Santiago, Chile}
\email{castaneda@uchile.cl, ignacio.huerta@uchile.cl,  grobledo@uchile.cl }
\subjclass[2020]{37B55, 34D09, 34C11}
\keywords{Markus-Yamabe Problem; Nonuniform Bounded Growth; Nonuniform Exponential Dichotomy; Nonuniform Asymptotic Stability; Nonuniform Spectrum; Diagonal Significance}
\thanks{The first author was funded by FONDECYT Regular Grant 1200653. The second author was funded by FONDECYT POSTDOCTORAL Grant 3210132. The third author was funded by FONDECYT Regular Grant 1210733}
\begin{document}

\maketitle

\begin{abstract}
By considering the nonuniform exponential dichotomy spectrum, we introduce a global asymptotic nonuniform stability conjecture for nonautonomous differential systems, whose restriction to the autonomous case is related to the classical Markus--Yamabe Conjecture: we prove that the conjecture is verified for a family of triangular systems of nonautonomous differential equations satisfying boundedness assumptions. An essential tool to carry out the proof is a necessary and sufficient condition ensuring the property of nonuniform exponential dichotomy for upper block triangular linear differential systems. We also obtain some byproducts having interest on itself, such as, the diagonal significance property in terms on the above mentioned spectrum.  
\end{abstract}

\section{Introduction}

\subsection{State of art} In the last decade, the problem of global stability for ordinary differential equations, also known as the Markus--Yamabe Conjecture, has been revisited from different approaches: i) the case of continuous and discontinuous piecewise autonomous vector fields have been considered by  J. Llibre $\&$ X. Zhang \cite{Llibre1},  L. Menezes \cite{Llibre2}, and Y.Zhang $\&$ X-S. Yang  \cite{ZhangYang}, ii) an infinite--dimensional perspective has been studied by H.M. Rodrigues \textit{et al.} \cite{Rodrigues}, iii) on a nonautonomous context,  D. Cheban \cite{Cheban} worked in the framework of cocycles, while \'A. Casta\~neda and G. Robledo \cite{CR}  established a version of this problem of global stability in terms of the uniform exponential dichotomy spectrum.

Let us recall that the Markus-Yamabe Conjecture is a problem  of global asymptotic stability for continuous autonomous dynamical systems on finite dimension, introduced in 1960 by L. Markus and H. Ya\-ma\-be
\cite{MY}, which states that if the differential system
$\dot{x}=f(x)$,
where $f\colon \mathbb{R}^{n}\to \mathbb{R}^{n}$ of class $C^{1}$, $f(0)=0$ and it
is a Hurwitz vector field, that is, the eigenvalues of the Jacobian matrix of $f$ have negative real part at any $x\in \mathbb{R}^{n}$, or equivalently $Jf(x)$ is a Hurwitz matrix for any $x$, then the origin is globally uniformly asymptotically stable.

It is known that this global stability problem is true
when $ n \leq 2 .$ For details about the proof in the planar case, see  R. Fe{\ss}ler in \cite{F}, A.A. Glutsyuk in \cite{Glu} and C. Guti\'errez in \cite{Gu}. When $ n \geq 3$, the conjecture is false due to the work A. Cima \textit{et al.} \cite{CEGMH}, where it has been founded a polynomial vector field satisfying the hypothesis of the problem, however the corresponding differential system has a solution which escape to infinity.

In spite that the conjecture is now completely resolved on its autonomous classical version, many authors have dedicated to determine vector fields satisfying both the hypothesis and its conclusion (see \cite{CG}, \cite{CGM}, \cite{GC}). Remarkable examples of such setting is the case of triangular and gradient vector fields. Indeed, in a triangular context, L. Markus and H. Yamabe proved that the conjecture is true \cite[Th.4]{MY} while  P. Hartman \cite[p. 539 Corollary 11.2]{Hartman} (see also \cite{Manosas}) showed that the conjecture is also true for gradient vector fields.

The main idea of this article is to settle a global nonuniform stability problem for nonlinear nonautonomous systems
\begin{equation}
\label{sistema2}
\dot{x}=f(t,x).
\end{equation}

In the linear case, it can be proved that the global nonuniform asymptotic stability is consequence of the nonuniform exponential stability, which can be described in terms of a particular nonuniform exponential dichotomy. We emphasize that this property of dichotomy is associated to a spectral theory, which will allow us to emulate the notion of Hurwitz vector fields to the nonuniform framework.

\subsection{The Nonuniform conjecture}

Let us recall that the Global Stability Conjecture is stated in terms of the negativeness of the real part of the eigenvalues of $Jf(x)$
and the attractiveness of the origin has a behavior described by the uniform asymptotic stability. Con\-tra\-ri\-ly,  the stability of a linear nonautonomous system cannot be always determined by its eigenvalues, see \textit{e.g.} \cite[p.310]{MY}. However, we point out 
about the existence of several spectral theories based either on characteristic exponents (Lyapunov, Perron and Bohl exponents) or dichotomies \cite{Dieci}, which allow to describe a wide range of asymptotic stabilities for nonautonomous linear systems, being the uniform asymptotic stability only a particular case. 

In this article, we will work with the property of global nonuniform asymptotic stability to settle a noununiform  Markus--Yamabe conjecture \textbf{(NU--MYC)} and we prove that this conjecture is true for triangular systems.

\subsection{Triangular case setting}
As we above stated, in this article we will prove that this conjecture is verified for a family of triangular vector fields. We point out that the proof is completely different to the ones made in \cite[Th.4]{MY} in an autonomous framework and \cite[Cor.1]{CR} in a uniform nonautonomous case, respectively. In both previous works, as the underlying stability is the uniform one, we can use known results describing the stability of triangular differential systems in terms of its diagonal properties. A tool used in the proof of the triangular case in a nonautonomous uniform context is a result of F. Batelli and  K.J. Palmer in \cite{Batelli} providing necessary and sufficient conditions ensuring that an upper triangular block linear system has a uniform exponential dichotomy on the half line whenever its diagonal subsystems also have this property. 

In order to obtain a similar tool to tackle the nonuniform context, we generalize the result of Batelli and Palmer by using the Lemma of Uniformization introduced by L. Zhou \textit {et al.} in \cite[Lemma 1]{W. Zhang} and its consequences. The generalization has a chain of byproducts, which allow us to conclude that \textbf{NU--MYC} is verified for a family of triangular systems.

\subsection{Structure} The section 2 gives a general setting: i) The subsection 2.1 makes a brief review of qualitative nonuniform properties of linear nonautonomous systems such as bounded growth, contractions, exponential dichotomies and its corresponding spectrum together with their interrelationships, ii) The subsection 2.2 gives an overview the functional framework, namely a set of parametrized norms, needed to introduce the Uniformization Lemma. The section 3 recalls the property of global nonuniform asymptotic stability for an equilibrium of (\ref{sistema2}) and introduces the \textbf{NU--MYC}.

\medskip
 
 The main results are presented in the section 4:

\medskip
 
 \noindent $\bullet$ A technical result --in a nonuniform framework-- 
 compares  an upper triangular linear block system with its corresponding diagonal subsystems, this last ones having the properties of bounded growth and exponential dichotomy. It is proved that the property of dichotomy is preserved for the upper triangular block system
provided that the non diagonal block is bounded in terms a of a parametrized norm.

\medskip
 
\noindent $\bullet$ The above mentioned technical result have several consequences as the characterization of the spectrum of the nonuniform exponential dichotomy for upper triangular block systems and upper triangular systems. In particular, we extend the property of diagonal significance to the nonuniform framework.

\medskip

\noindent $\bullet$ By encompassing the previous results, the \textbf{NU--MYC} is proved for the case of triangular systems.

\subsection{Notations}
Throughout this paper, $|\cdot|$ will denote a vector norm whose induced matrix norm is given by $||\cdot||$. The set $[0,+\infty)$ is denoted by $\mathbb{R}_{0}^{+}$ and the set of square 
$n\times n$ matrices with real coefficients is denoted by $M_{n}(\mathbb{R})$, while $I_{n}$ is the identity matrix. A continuous function $M\colon\mathbb{R}_{0}^{+}\to[1,+\infty)$ will be called a \textit{growth rate}.

\section{Preliminar definitions and contextualization}
\subsection{Nonuniform bounded growth, nonuniform dichotomies and spectrum}
Let us consider the nonautonomous linear differential system
\begin{equation}
\label{lin}
\dot{x}=A(t)x,
\end{equation}
where $A\colon \mathbb{R}_{0}^{+}\mapsto M_{n}(\mathbb{R})$
is a locally integrable matrix function.
A basis of solutions of (\ref{lin}) is denoted by $T(t)$, satisfies the matrix differential equation $\dot{T}(t)=A(t)T(t)$ and its corresponding evolution operator is $T(t,s):=T(t)T^{-1}(s)$. It is straightforward to verify that the solution
of (\ref{lin}) with initial condition $x_{0}$ at $t=t_{0}$ is defined by $x(t,t_{0},x_{0})=T(t,t_{0})x_{0}$.



Similarly as in the uniform case, there exist two definitions of nonuniform bounded growth in the literature:

\begin{definition} The evolution operator $T(t,s)$  of \textnormal{(\ref{lin})}  has a:
\begin{itemize}
\item[a)] Full $(M(s),\nu)$--nonuniform bounded growth if there exist a constant $\nu>0$ and a growth rate $M\colon\mathbb{R}_{0}^{+}\to[1,+\infty)$  such that 
\begin{equation*}
\left \| T (t,s) \right \| \leq M(s)e^{\nu\left | t-s \right|} \quad \textnormal{for any $t,s \in\R_0^{+}$},
\end{equation*}
\item[b)] Half $(M(s),\nu)$--nonuniform bounded growth if there exist a constant $\nu>0$ and a growth rate $M\colon\mathbb{R}_{0}^{+}\to[1,+\infty)$  such that 
\begin{equation*}
\left \| T (t,s) \right \| \leq M(s)e^{\nu (t-s)} \quad \textnormal{for any $t\geq s\geq 0$}.
\end{equation*}
\end{itemize}
\end{definition}

We point out that there are no standard definition of bounded growth in the current literature and we are proposing the previous ones in order distinguished them and its consequences. Note that:
\begin{itemize}
    \item[i)] The property of half $(M(s),\nu)$--nonuniform bounded growth is considered in \cite[p.686]{W. Zhang} under the name of \textit{nonuniform bounded growth}.
    \item[ii)] The property of the full $(M(s),\nu)$--nonuniform bounded growth is treated in the particular case of $M(s)=Me^{\varepsilon s}$ where $\varepsilon\geq 0$ and $M\geq1$ by \cite[p.547]{Chu} and  \cite[p.1892]{Zhang}, also under the name of \textit{nonuniform bounded growth}.
    \item[iii)] We propose to denote the particular case of $M(s)=M\geq 1$ as \textit{uniform bounded growth}, which has been considered respectively by S. Siegmund \cite[p.253]{Siegmund2002} and W. Coppel \cite[pp.8-9]{Coppel} in a full and half version respectively under the name of \textit{bounded growth}. We also highlight the related definitions proposed by K.J. Palmer in \cite[pp.172]{Palmer-06}.
\end{itemize}

From now on, in this paper we will work with the property of half $(Me^{\delta s},\nu)$--nonuniform bounded growth.



The property of exponential dichotomy plays an important role in the study of nonautonomous linear systems. A formal definition adapted to the nonuniform framework is given by:
\begin{definition}
The system (\ref{lin}) has a $(K(s),\gamma)$--nonuniform exponential dichotomy on $\mathbb{R}_{0}^{+}$ if there exist
a family of invariant projections $P(t)\colon \mathbb{R}^{n}\to \mathbb{R}^{n}$ for any
$t\in \mathbb{R}_{0}^{+}$, a constant $\gamma>0$ and a growth rate
$K\colon \mathbb{R}_{0}^{+}\to [1,+\infty)$ such that:
\begin{equation}
\label{Invar}
T(t,s)P(s)=P(t)T(t,s)  \quad \textnormal{for $t, s\geq 0$} ,
\end{equation}
\begin{equation}
\label{DENU-PI}
\left\{\begin{array}{rcl}
||T(t,s)P(s)|| &\leq & K(s)e^{-\gamma(t-s)} \quad \textnormal{for $t\geq s\geq 0$},  \\\\
||T(t,s)[I-P(s)]|| &\leq & K(s)e^{-\gamma(s-t)} \quad \textnormal{for $0\leq t\leq s$}.
\end{array}\right.
\end{equation}
\end{definition}

The property (\ref{Invar}) implies that the range of $P(\cdot)$ is invariant for any $t$, which motivates the name \textit{invariant projectors}, we refer the reader to \cite{Kloeden} for details.

The inequalities (\ref{DENU-PI}) imply that $t\mapsto x(t):=T(t,t_{0})\xi$, the forward solution of (\ref{lin}) passing through $\xi\neq 0$ at $t=t_{0}$, can be splitted by $P(t_{0})$ in $t\mapsto x^{+}(t):=T(t,t_{0})P(t_{0})\xi$ and $t\mapsto x^{-}(t):=T(t,t_{0})[I-P(t_{0})]\xi$, whose behavior for any $t\geq t_{0}$ verifies that
\begin{displaymath}
\begin{array}{rcl}
|T(t,t_{0})P(t_{0})\xi|&\leq& K(t_{0})e^{-\gamma(t-t_{0})}|P(t_{0})\xi| \\\\
(1/K(t_{0}))e^{\gamma(t-t_{0})}|[I-P(t_{0})]\xi|&\leq& |T(t,t_{0})[I-P(t_{0})]\xi|,
\end{array}
\end{displaymath}
that is any solution $x(t)$ of (\ref{lin}) is the sum of 
two solutions $x(t)=x^{-}(t)+x^{+}(t)$ having a dichotomic behavior: the
$(K(s),\gamma)$--\textit{nonuniform exponential contraction} $t\mapsto T(t,t_{0})P(t_{0})\xi$ and the $(K(s),\gamma)$--\textit{nonuniform exponential expansion} $t\mapsto T(t,t_{0})[I-P(t_{0})]\xi$.  In this context, it will be useful to consider the following definition:
\begin{definition} The system (\ref{lin}) is a  $(K(s),\gamma)$--\textit{nonuniform exponential contraction} if it
has a $(K(s),\gamma)$--nonuniform exponential dichotomy on $\mathbb{R}_{0}^{+}$ with the projector $P(t)=I$ for any $t\geq 0$.
\end{definition}

On the other hand, we emphasize that the growth rate $K(\cdot)$ can take a wide range of possible cases: a distinguished one, which is older in the literature, is given by the constant function $K(s):=K\geq 1$ and corresponds to the \textit{uniform exponential dichotomy}
on $\mathbb{R}_{0}^{+}$ since the exponential contractions and expansions 
have an exponential rate which are independent of the initial time $t_{0}$. Having in mind this noteworthy case, we can see that the nonuniform exponential dichotomies have been residually defined as dichotomies where the growth rate $K(\cdot)$ is not a constant function and dependent of $t_0.$

In this article, we will focus in the particular case of nonuniform exponential dichotomy having a growth rate described by $K(s)=Ke^{\varepsilon s}$: 
\begin{definition}
\label{DENU-BV}
The system (\ref{lin}) has a $(Ke^{\varepsilon s} ,\gamma)$--nonuniform exponential dichotomy on $\mathbb{R}_{0}^{+}$ if there exist
a family of invariant projections $P(t)\colon \mathbb{R}^{n}\to \mathbb{R}^{n}$ for any
$t\in \mathbb{R}_{0}^{+}$, a constant $K\geq 1$, and a couple $(\gamma,\varepsilon)$ of constants such that $\varepsilon \in [0,\gamma)$ and
\begin{equation*}
T(t,s)P(s)=P(t)T(t,s)  \quad \textnormal{for $t,s\geq 0$} ,
\end{equation*}

\begin{equation*}
\left\{\begin{array}{rcl}
||T(t,s)P(s)|| &\leq & Ke^{-\gamma(t-s)}e^{\varepsilon s} \quad \textnormal{for $t\geq s\geq 0$},  \\\\
||T(t,s)[I-P(s)]|| &\leq & Ke^{-\gamma(s-t)}e^{\varepsilon s} \quad \textnormal{for $0\leq t\leq s$}.
\end{array}\right.
\end{equation*}
\end{definition}


In \cite{Dragicevic} and references therein 
is considered the case of linear systems (\ref{lin}) having simultaneously the properties of $(Ke^{\varepsilon s},\gamma)$--nonuniform exponential dichotomy and full $(K_{0}e^{\mu s},\nu)$--nonuniform bounded growth. This property is called \textit{nonuniform strong exponential dichotomy} and the next result describes the relation between the properties of (nonuniform) exponential dichotomy and bounded growth:
\begin{lemma}
If the linear system (\ref{lin}) has the properties of $(Ke^{\varepsilon s},\gamma)$--nonuniform exponential dichotomy and full $(K_{0}e^{\mu s},\nu)$--nonuniform bounded growth on $\mathbb{R}_{0}^{+}$, then it follows that the constants satisfy the inequality:
\begin{equation}
\label{constraints-IH}
\nu+\max\{\mu,\varepsilon\}\geq \gamma.
\end{equation}
\end{lemma}

\begin{proof}
The proof will be made by contradiction: we will assume that (\ref{constraints-IH}) is not verified and we have that $\gamma >   \max\{\mu,\varepsilon\} + \nu   $, which implies the inequalities
\begin{equation}
\label{CIH1}
\gamma >  \mu + \nu   \quad \textnormal{and} \quad 
\gamma >   \varepsilon + \nu.  
\end{equation}

Firstly, let us consider the case $t\geq s$: by using properties of matrix norms combined with the invariance property, we can deduce that
\begin{displaymath}
\begin{array}{rcl}
||I|| & = & ||T(t,s)\,T(s,t)||\\\\
& = & ||T(t,s)[P(s)+Q(s)]T(s,t)||\\\\
&\leq &  ||T(t,s)P(s)||\, ||T(s,t)||+||T(t,s)||\,||T(s,t)Q(t)||,
\end{array}
\end{displaymath}
where $Q(s)=I-P(s)$. Now, by using the dichotomy and bounded growth properties, we can deduce that
\begin{displaymath}
\begin{array}{rcl} 
||I|| &\leq& \displaystyle KK_{0}e^{-\gamma(t-s)+\varepsilon s}e^{\nu(t-s)+\mu t}+KK_{0}e^{-\gamma(t-s)+\varepsilon t}e^{\nu(t-s)+\mu s} \\\\
&\leq & \displaystyle KK_{0}\left[e^{(-\gamma+\mu+\nu)t}e^{(\varepsilon+\gamma-\nu)s}+e^{(-\gamma+\varepsilon+\nu)t}e^{(\gamma-\nu+\mu)s}\right].
\end{array}
\end{displaymath}

Notice that (\ref{CIH1}) is equivalent to $-\gamma+\mu+\nu <0$ and $-\gamma+\varepsilon+\nu<0$. Then letting $t\to +\infty$ leads to
$||I||\leq 0$, obtaining a contradiction.

The case $s\geq t$ can be addressed in a similar way. In fact, by following the lines of the previous case, we can easily deduce that
\begin{displaymath}
||I||\leq ||T(t,s)|| \,||T(s,t)P(t)||+||T(t,s)Q(s)||\, ||T(s,t)||    
\end{displaymath}
and by using the dichotomies and bounded growth properties we can deduce that
\begin{displaymath}
\begin{array}{rcl} 
||I|| &\leq& \displaystyle KK_{0}e^{-\gamma(s-t)+\varepsilon t}e^{\nu(s-t)+\mu s}+KK_{0}e^{-\gamma(s-t)+\varepsilon s}e^{\nu(s-t)+\mu t} \\\\
&\leq & \displaystyle KK_{0}\left[e^{(-\nu+\gamma+\varepsilon)t}
e^{(-\gamma+\mu+\nu)t}
+e^{(-\nu+\mu+\gamma)t}e^{(\nu-\gamma+\varepsilon)s}\right],
\end{array}
\end{displaymath}
and a contradiction can be obtained again by letting $s\to +\infty$. 
\end{proof}

\begin{remark}
To the best of our knowledge, there are no results describing the relation between the exponential dichotomy and bounded growth in the nonuniform case. When $\mu=\varepsilon=0$, we recover the property $\gamma\leq \nu$ mentioned by Shi and Xiong \cite[p.823]{Shi} for the uniform framework.
\end{remark}

Moreover, the nonuniform exponential dichotomy has been considered in 
several works as \cite{BV-LNCS,Chu,Zhang} and deserves additional remarks:  

$\bullet$ Note that if $\varepsilon=0$, we recover the uniform exponential dichotomy on $\mathbb{R}_{0}^{+}$ and this prompts to denote the term $e^{\varepsilon s}$ as the \textit{nonuniform part}. 

$\bullet$ In \cite[Theorem 10.22]{BV-LNCS}, by using the Multiplicative Ergodic Theorem \cite[10.27]{BV-LNCS} and the Oseledets-Pesin Reduction result \cite[Theorem 10.28]{BV-LNCS}, it is shown that almost all variational equations obtained from a measure-preserving flow admit nonuniform exponential dichotomy and furthermore, the nonuniformity rate is arbitrarily small.

There exists a spectral theory associated to the $(Ke^{\varepsilon s} ,\gamma)$--nonuniform exponential dichotomy on $\mathbb{R}$, which has been constructed in \cite{Chu,Zhang} and adapted to the half line by \cite{Huerta,Zhu} in the continuous case. The above mentioned spectral theory is based in the following definition:

\begin{definition}\textnormal{(\cite{Chu,Zhang,Zhu})}
The $(Ke^{\varepsilon s},\gamma)$--nonuniform exponential dichotomy spectrum of (\ref{lin}) is the set $\Sigma^{+}(A)$ of $\lambda\in\R$ such that the system 
 \begin{equation}
 \label{sistemaperturbado}
\dot{x}=[A(t)-\lambda I]x
 \end{equation}
does not have a $(Ke^{\varepsilon s},\gamma)$--nonuniform exponential dichotomy on $\R_{0}^{+}.$ The resolvent $\rho(A)$ is defined as $\mathbb{R}\setminus\Sigma^{+}(A)$, namely, the values of $\lambda$ such that the system (\ref{sistemaperturbado}) have a $(Ke^{\varepsilon s},\gamma)$--nonuniform exponential dichotomy on $\mathbb{R}_{0}^{+}$.
\end{definition}


The description of $\Sigma^{+}(A)$ is summarized by the following result:

\begin{proposition}
\label{comp-spec}
Let us consider the evolution operator $T(t,s)$  of (\ref{lin}):
\begin{itemize}
\item[a)] \cite[Th.2,8]{Chu} If $T(t,s)$
has a half $(Me^{\delta s},\nu)$--nonuniform bounded growth, then
its nonuniform spectrum verifies $\Sigma^{+}(A)\subset (-\infty,\nu]$ and is the union of $m$ intervals where $0<m\leq n$, such that:
\begin{displaymath}
\Sigma^{+}(A)=\left\{\begin{array}{c}
     [a_1,b_1]  \\
      \textnormal {or}\\
     (-\infty,b_1]
\end{array}\right\}\cup\;[ a_2,b_2 ]\;\cup\;\cdots\;\cup\;[a_{m-1},b_{m-1}]\;\cup [a_{m},b_{m}].
\end{displaymath}
\item[b)] \textnormal{\cite[Cor.2.11]{Chu},\cite[Th.1.2]{Zhang} } If $T(t,s)$
has a full $(Me^{\delta s},\nu)$--nonuniform bounded growth, then
its nonuniform spectrum verifies $\Sigma^{+}(A)\subset [-\nu,\nu]$ and is the union of $m$ intervals where $0<m\leq n$, such that:
\begin{displaymath}
\Sigma^{+}(A)=
     [a_1,b_1]\cup\;[ a_2,b_2 ]\;\cup\;\cdots\;\cup\;[a_{m-1},b_{m-1}]\;\cup [a_{m},b_{m}],
\end{displaymath}
\end{itemize}
\end{proposition}

For $i\in \{1,\ldots,m\}$,
the intervals $[a_{i},b_{i}]$ are called 
\textit{spectral intervals}  while $\rho_{i+1}(A):=(b_{i},a_{i+1})$ are called \textit{spectral gaps} and there always exists a unbounded spectral gap $\rho_{m+1}(A)=(b_{m},+\infty)$. Notice that for any $\lambda \in \rho_{j}(A)$, by the definition of $\Sigma^{+}(A)$, it follows that the system (\ref{sistemaperturbado}) has a $(K_{\lambda}e^{\varepsilon_{\lambda} s},\gamma_{\lambda})$--nonuniform exponential dichotomy with $P_{j}:=P_j(\cdot)$and it can be proved, see \emph{e.g.} \cite{Chu}, that:
\begin{itemize}
    \item[i)] If the first spectral interval is given by $[a_1,b_1]$, then $P_{1}=0$, $P_{m+1}=I_{n}$ and 
$$
\dim \rank P_{i}<\dim \rank P_{i+1}
\,\, \textnormal{for any $i=1,\ldots,m$}. 
$$
\item[ii)] If the first spectral interval is given by $(-\infty,b_1]$, then $P_{m+1}=I_{n}$ and 
$$
\dim\rank P_{i}<\dim\rank P_{i+1}
\,\, \textnormal{for any $i=2,\ldots,m$}. 
$$
\end{itemize}

The properties of $\Sigma^{+}(A)$ and its spectral gaps provides an alternative characterization of the  $(Ke^{\varepsilon s},\gamma)$--nonuniform contractions.
\begin{lemma}
The system (\ref{lin}) has a $(Ke^{\varepsilon s},\gamma)$--\textit{nonuniform exponential contraction} if and only if
$\Sigma^{+}(A)\subset (-\infty,0)$.
\end{lemma}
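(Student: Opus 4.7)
My plan is to leverage the simple observation that the evolution operator of the perturbed system (\ref{sistemaperturbado}) equals $e^{-\lambda(t-s)}T(t,s)$, so that perturbing (\ref{lin}) by $\lambda I$ merely shifts the exponential contraction rate by $\lambda$ while leaving the nonuniform factor $e^{\varepsilon s}$ untouched.

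For the implication $(\Rightarrow)$, I would start from the contraction estimate $\|T(t,s)\|\le Ke^{-\gamma(t-s)}e^{\varepsilon s}$ valid for $t\ge s\ge 0$. For any $\lambda\ge 0$ this immediately yields $\|e^{-\lambda(t-s)}T(t,s)\|\le Ke^{-(\gamma+\lambda)(t-s)}e^{\varepsilon s}$. Since $0\le \varepsilon<\gamma\le\gamma+\lambda$, Definition \ref{DENU-BV} is satisfied with the trivial projector $P\equiv I_n$, so (\ref{sistemaperturbado}) admits a $(Ke^{\varepsilon s},\gamma+\lambda)$-nonuniform exponential dichotomy. Thus $[0,+\infty)\subset\rho(A)$ and therefore $\Sigma^{+}(A)\subset(-\infty,0)$.

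For the implication $(\Leftarrow)$, I would combine the hypothesis $\Sigma^{+}(A)\subset(-\infty,0)$ with the spectral decomposition (\ref{espectro}) together with the classification of invariant projectors recalled immediately after it. The inclusion rules out the unbounded form $[a_m,+\infty)$ for the rightmost spectral interval, which must therefore be $[a_m,b_m]$ with $b_m<0$. Consequently the terminal spectral gap $\rho_{m+1}(A)=(b_m,+\infty)$ contains $0$ and its associated invariant projector is $P_{m+1}=I_n$. Specialising to $\lambda=0$ shows that (\ref{lin}) itself admits a $(Ke^{\varepsilon s},\gamma)$-nonuniform exponential dichotomy with projector $I_n$, which is exactly the desired nonuniform exponential contraction.

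The delicate step is the backward direction, whose entire content is to correctly place $0$ in the terminal gap $(b_m,+\infty)$ rather than in some finite gap squeezed between two spectral intervals, and then to read off $P_{m+1}=I_n$ from the projector convention following (\ref{espectro}). The forward direction, by contrast, is essentially bookkeeping on how the perturbation rescales $T(t,s)$ by the factor $e^{-\lambda(t-s)}$.
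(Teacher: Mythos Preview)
Your proof is correct, and the backward implication $(\Leftarrow)$ matches the paper's argument exactly: rule out the unbounded rightmost interval, place $0$ in the terminal gap $\rho_{m+1}(A)=(b_m,+\infty)$, and read off $P_{m+1}=I_n$.

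For the forward implication $(\Rightarrow)$ you take a slightly different route from the paper. The paper observes that a contraction is, by definition, a dichotomy at $\lambda=0$ with projector $I_n$; since $I_n$ has full range it must be the projector $P_{m+1}$ associated with the terminal gap, forcing $0\in\rho_{m+1}(A)=(b_m,+\infty)$ and hence $b_m<0$. You instead compute directly that the shifted evolution operator $e^{-\lambda(t-s)}T(t,s)$ is a $(Ke^{\varepsilon s},\gamma+\lambda)$-contraction for every $\lambda\geq 0$, obtaining $[0,+\infty)\subset\rho(A)$ without invoking the projector-rank hierarchy at all. Your argument is a touch more elementary in that it needs neither the spectral decomposition (\ref{espectro}) nor the dimension monotonicity of the $P_i$ for this direction; the paper's argument, on the other hand, treats both implications uniformly through the gap structure and makes transparent \emph{why} $0$ lands in the rightmost gap rather than an interior one.
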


\begin{proof}
If $\Sigma^{+}(A)\subset (-\infty,0)$, it follows that $0\in \rho_{m+1}(A)$ and the system
(\ref{sistemaperturbado}) with $\lambda=0$ coincides with (\ref{lin}), which has a nonuniform exponential dichotomy with projector $P_{m+1}=I_{n}$. Then, from Definition \ref{DENU-BV}, it is direct that (\ref{lin}) has a $(Ke^{\varepsilon s},\gamma)$--nonuniform exponential contraction. Now, if the linear system (\ref{lin}) has
a $(Ke^{\varepsilon s},\gamma)$--nonuniform exponential contraction, we know that this equivalent to say that (\ref{lin}) has a $(Ke^{\varepsilon s},\gamma)$--nonuniform exponential dichotomy with the identity as projector,
which has full range, then we have that $\lambda=0\in \rho_{m+1}(A)=(b_{m},+\infty)$, which implies that
$\Sigma^{+}(A)\subset (-\infty,b_{m})\subset (-\infty,0)$ since $b_{m}<0$.
\end{proof}

\subsection{Uniformization Lemma}
Given a linear system (\ref{lin}) having a nonuniform exponential dichotomy and a half nonuniform bounded growth, the Uniformization 
Lemma provides a way to endow it with both an uniform exponential dichotomy and uniform bounded growth. Nevertheless, the price to pay is to work in a functional framework described by parametrized vector and operator norms. This result was developed by L. Zhou, K. Lu and W. Zhang in \cite[p.697]{W. Zhang}. As a previous step to its statement, we
need to consider a family of norms in $\mathbb{R}^{n}$ parametrized by $\mathbb{R}_{0}^{+}$ as $\left\{|\cdot|_{t}\right \}_{t\in\mathbb{R}_{0}^{+}}$, that is, $|\cdot|_{t}$ is a norm of $\mathbb{R}^{n}$ for any $t\in \mathbb{R}_{0}^{+}$. These norms are also known as \textit{Lyapunov norms} and have been introduced by L. Barreira and C. Valls in \cite[Section 5.4.2]{BV-LNCS} by following
an approach inspired in the Lyapunov metric constructed by Y.B. Pesin \cite[Sect.1]{Pesin}.

Let $\{|\cdot|_{t}\}_{t\in \mathbb{R}_{0}^{+}}$ be a family of norms. By following \cite{W. Zhang}, we summarize basic facts about this family:
\begin{itemize}
\item[a)] By equivalence of norms,
there exist two functions $L_{i}\colon \mathbb{R}_{0}^{+}\to (0,+\infty)$ such that
\begin{equation}
\label{ineg01}
L_{1}(t)|x|\leq |x|_{t} \leq L_{2}(t)|x| \quad \textnormal{for any $t\in \mathbb{R}_0^{+}$}.
\end{equation}
\item[b)] The family $\{|\cdot|_{t}\}_{t\in \mathbb{R}_{0}^{+}}$ is continuous if the mapping $t\mapsto |x|_{t}$ is continuous on $\mathbb{R}_{0}^{+}$ for any fixed $x\in \mathbb{R}^{n}$. 
In this case, it follows that
the functions $L_{i}$ from (\ref{ineg01}) are continuous on $\mathbb{R}_{0}^{+}$,
\cite[Prop.1]{W. Zhang}. 
\item[c)] A continuous family $\{|\cdot|_{t}\}_{t\in \mathbb{R}_{0}^{+}}$ is called \textit{uniformly lower bounded} if $t\mapsto L_{1}(t)$ is uniformly bounded by a positive constant $L_{1}>0$ and (\ref{ineg01}) can be replaced by
\begin{equation}
\label{ineg1}
L_{1}|x|\leq |x|_{t} \leq L_{2}(t)|x| \quad \textnormal{for any $t\in \mathbb{R}_0^{+}$}.
\end{equation}
\end{itemize}

\begin{remark}
\label{ejemploCacotada}
Given a continuous family of norms $\{|\cdot|_{\tau}\}_{\tau\in\mathbb{R}_{0}^{+}}$ defined on $\mathbb{R}$,
it can be proved that there exists a continuous function $h\colon \mathbb{R}_{0}^{+}\to (0,\infty)$ such that
\begin{equation}
\label{normaparamdim1}
|x|_{\tau}=h(\tau)|x| \quad \textnormal{for any $x\in \mathbb{R}$}.
\end{equation} 
\end{remark}

In this article, we will assume that  $\left\{|\cdot|_{t}\right\}_{t\in \mathbb{R}_{0}^{+}}$ is a uniformly lower bounded continuous family of norms. In consequence, we are implicitly assuming that $t\mapsto L_{2}(t)$ is a unbounded and continuous
function. The following technical lemma will be useful
\begin{lemma}
Given a couple of linear operators  
$U\colon (\mathbb{R}^{n},|\cdot|_{s})\to (\mathbb{R}^{m},|\cdot|_{t})$
and $U\colon (\mathbb{R}^{n},|\cdot|)\to (\mathbb{R}^{m},|\cdot|)$
with norms defined by
$$
||U||_{s,t}= \sup\limits_{x\neq 0}\frac{|Ux|_{t}}{|x|_{s}} \quad
\textnormal{and} \quad
||U||=\sup\limits_{x\neq 0}\frac{|Ux|}{|x|},
$$
it follows that
\begin{equation}
\label{identidad2}
\frac{1}{\beta(t)}||U||_{s,t}\leq ||U||\leq \beta(s)||U||_{s,t}
\quad
\textnormal{and}
\quad
\frac{1}{\beta(s)}||U||\leq ||U||_{s,t}\leq \beta(t)||U||,
\end{equation}
where $\beta \colon \mathbb{R}_{0}^{+}\to [1,\infty)$ is the continuous and upperly unbounded function:
\begin{equation*}
\beta(\tau)=\frac{L_{2}(\tau)}{L_{1}}.
\end{equation*} 
\end{lemma}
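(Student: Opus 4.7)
The plan is to derive both chains of inequalities in (\ref{identidad2}) directly from the uniformly lower bounded continuous family inequality (\ref{ineg1}), applied at the parameter $s$ to bound $|x|_s$ in the denominator and at the parameter $t$ to bound $|Ux|_t$ in the numerator of the quotient defining $\|U\|_{s,t}$. No functional-analytic machinery is needed beyond monotonicity of suprema; the entire proof reduces to sandwiching the ratio $|Ux|_t / |x|_s$ between scalar multiples of $|Ux|/|x|$.

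Concretely, first I would write, for any $x \neq 0$,
\begin{equation*}
L_1 |Ux| \leq |Ux|_t \leq L_2(t) |Ux|, \qquad L_1 |x| \leq |x|_s \leq L_2(s) |x|,
\end{equation*}
which are instances of (\ref{ineg1}) in $\mathbb{R}^m$ and $\mathbb{R}^n$ respectively (the lower bound being uniform thanks to the standing hypothesis of uniform lower boundedness). Dividing the numerator upper bound by the denominator lower bound yields
\begin{equation*}
\frac{|Ux|_t}{|x|_s} \leq \frac{L_2(t)}{L_1}\cdot \frac{|Ux|}{|x|} = \beta(t)\,\frac{|Ux|}{|x|},
\end{equation*}
and dividing the numerator lower bound by the denominator upper bound yields
\begin{equation*}
\frac{|Ux|_t}{|x|_s} \geq \frac{L_1}{L_2(s)}\cdot \frac{|Ux|}{|x|} = \frac{1}{\beta(s)}\cdot\frac{|Ux|}{|x|}.
\end{equation*}

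Next I would take the supremum over $x \neq 0$ on both inequalities. Since $\beta(t)$ and $\beta(s)$ are constants with respect to $x$, they pass through the supremum and we obtain
\begin{equation*}
\frac{1}{\beta(s)} \|U\| \leq \|U\|_{s,t} \leq \beta(t)\|U\|,
\end{equation*}
which is the right-hand inequality of (\ref{identidad2}). The left-hand inequality then follows by dividing the upper bound by $\beta(t)$ and multiplying the lower bound by $\beta(s)$, using that $\beta(\tau) \geq 1$ is well defined and strictly positive. Finally, continuity and unboundedness of $\beta$ are inherited from those of $L_2(\cdot)$, completing the verification of the claimed properties of $\beta$.

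I do not foresee any real obstacle: the statement is essentially a bookkeeping consequence of norm equivalence, and the only subtlety is keeping track of which of $s$ or $t$ appears in the numerator versus denominator bound, so the parameter index of $\beta$ is matched correctly in each inequality.
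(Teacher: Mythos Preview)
Your proposal is correct and follows essentially the same approach as the paper: apply the norm-equivalence inequality (\ref{ineg1}) separately to $|Ux|_t$ in the numerator and $|x|_s$ in the denominator to sandwich the quotient $|Ux|_t/|x|_s$ between $\frac{L_1}{L_2(s)}\frac{|Ux|}{|x|}$ and $\frac{L_2(t)}{L_1}\frac{|Ux|}{|x|}$, then take suprema. The paper's proof is in fact terser than yours, merely recording these two pointwise inequalities and noting that the lemma follows.
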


\begin{proof}
By using (\ref{ineg1}) it can be proved that for any $x\in \mathbb{R}^{n}$ it follows that
$$
\frac{|Ux|_{t}}{|x|_{s}}\geq \frac{L_{1}}{L_{2}(s)}\frac{|Ux|}{|x|} 
\quad \textnormal{and} \quad \frac{|Ux|_{t}}{|x|_{s}}\leq \frac{L_{2}(t)}{L_{1}}\frac{|Ux|}{|x|},
$$
and the Lemma follows.
\end{proof}

\begin{remark}
Given a linear operator $U\colon \mathbb{R}^{p}\to \mathbb{R}^{q}$ and
a couple $(t,\tau)$ of positive real numbers. It will be useful to recall the estimations
\begin{equation}
\label{UE1}
|U\xi|_{t}\leq ||U||_{\tau,t}|\xi|_{\tau},
\end{equation}
\begin{equation}
\label{UE2}
|U\xi|_{\tau}\leq ||U||_{\tau,\tau}|\xi|_{\tau},
\end{equation}
\begin{equation}
\label{UE3}
|U\xi|_{\tau}\leq ||U||_{t,\tau}|\xi|_{t}.
\end{equation}
\end{remark}

\begin{lemma}[Uniformization Lemma]
The system \textnormal{(\ref{lin})} has both a $(K(s),\alpha)$ --nonuniform exponential dichotomy and a half $(M(s),\nu)$--nonuniform bounded growth on $\mathbb{R}_0^+$ if and only if there exists a continuous family $\left\{|\cdot|_{t}\right\}_{t \in \mathbb{R}_0^+}$ of norms with a uniform lower bound such that Eq.\textnormal{(\ref{lin})} has a uniform exponential dichotomy with respect to $\left\{|\cdot|_{t}\right\}_{t \in \mathbb{R}_{0}^+}$, i.e., there are a projection $P(t): \mathbb{R}^{n} \rightarrow \mathbb{R}^{n}$ and a couple of constants $\alpha>0$ and $\kappa \geq 1$ such that the invariant decomposition condition (the last two inequalities in \textnormal{(\ref{DENU-PI})}) can be replaced by
$$
\begin{array}{rr}
\left\|T(t, s) P(s)\right\|_{s, t} \leq \kappa e^{-\alpha(t-s)} & \textnormal{ for } t \geq s\geq0  \\
\left\|T(t, s) (I-P(s))\right\|_{s, t} \leq \kappa e^{-\alpha(s-t)} & \textnormal{ for } s \geq t\geq0 
\end{array}
$$
and a half $(\mu,\nu)$--uniform bounded growth with respect to $\left\{|\cdot|_{t}\right\}_{t \in \mathbb{R}_{0}^+ }$, namely,
$$
\|T(t, s)\|_{s, t} \leq \mu e^{\nu(t-s)}, \quad t \geq s\in \mathbb{R}_{0}^{+}.
$$
\end{lemma}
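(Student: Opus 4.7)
The plan is to handle the easier direction first, then give the Lyapunov-norm construction for the nontrivial implication.

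For the sufficiency direction, I would assume the existence of a uniformly lower bounded continuous family $\{|\cdot|_t\}$ under which (\ref{lin}) admits a half uniform exponential dichotomy with constants $(\kappa,\alpha)$ and a half uniform bounded growth with constants $(\mu,\nu)$. Applying the right-hand inequality of (\ref{identidad2}) to $U=T(t,s)P(s)$ I would obtain $\|T(t,s)P(s)\|\leq \beta(s)\|T(t,s)P(s)\|_{s,t}\leq (\kappa L_2(s)/L_1)e^{-\alpha(t-s)}$, so the growth rate is $K(s)=\kappa L_2(s)/L_1$; the unstable half is symmetric, and the same estimate gives half $(M(s),\nu)$--nonuniform bounded growth with $M(s)=\mu L_2(s)/L_1$.

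For the necessity direction, the hard work is the construction of the parametrized norm. Fix any $\alpha_0\in(0,\alpha)$ and $\nu_0>\nu$, and define, for each $t\in\mathbb{R}_0^+$ and $x\in\mathbb{R}^n$,
\begin{equation*}
|x|_t^s=\sup_{r\geq 0}|T(t+r,t)P(t)x|\,e^{\alpha_0 r},\qquad |x|_t^u=\sup_{0\leq r\leq t}|T(t-r,t)(I-P(t))x|\,e^{\alpha_0 r},
\end{equation*}
and a bounded-growth piece $|x|_t^g=\sup_{r\geq 0}|T(t+r,t)x|\,e^{-\nu_0 r}$, finally setting $|x|_t=|x|_t^s+|x|_t^u+|x|_t^g$. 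The inclusion of the $|\cdot|_t^g$ summand is what will give the uniform bounded growth; $|\cdot|_t^s$ and $|\cdot|_t^u$ absorb the dichotomy nonuniformity by the same trick used in nonuniform hyperbolicity theory. The steps I would carry out are: (i) show $|x|_t\geq |P(t)x|+|(I-P(t))x|\geq |x|$, giving the uniform lower bound $L_1=1$; (ii) bound each summand above using the hypotheses —the dichotomy gives $|x|_t^s\leq K(t)|P(t)x|\sum$, so that $|x|_t\leq L_2(t)|x|$ with $L_2(t)$ a polynomial expression in $K(t),M(t),\|P(t)\|$— and verify the continuity of $t\mapsto|x|_t$ using continuity of $T(t,s)$, $P(t)$, and a uniform-on-compacts argument on the suprema; (iii) compute, for $t\geq s\geq 0$ and $y=P(s)x$,
\begin{equation*}
|T(t,s)y|_t^s=\sup_{r\geq 0}|T(t+r,s)y|e^{\alpha_0 r}=e^{-\alpha_0(t-s)}\sup_{r\geq 0}|T(s+(t-s+r),s)y|e^{\alpha_0(t-s+r)}\leq e^{-\alpha_0(t-s)}|y|_s^s,
\end{equation*}
with the unstable part treated by the same reindexing on $[0,t]$, and similarly
\begin{equation*}
|T(t,s)x|_t^g\leq e^{\nu_0(t-s)}|x|_s^g,
\end{equation*}
so that the full norm $|\cdot|_t$ satisfies the uniform dichotomy estimate with $\kappa=1$ and a uniform $(1,\nu_0)$--bounded growth.

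The main obstacle I expect is two-fold. First, making $|\cdot|_t^u$ well defined and continuous up to $t=0$, where the supremum is taken on the degenerate interval $[0,0]$; this requires a careful check that the patching with $|\cdot|_t^s$ and $|\cdot|_t^g$ preserves continuity and that the three summands behave compatibly under $T(t,s)$. Second, verifying that the resulting $|T(t,s)|_{s,t}$ bound really is uniform in $s$ for the bounded-growth piece; here the nonuniformity of $M(s)$ must be absorbed into $|\cdot|_s^g$ before we estimate, exactly as for the dichotomy pieces. Once those technicalities are settled the uniform estimates follow from the same translation-invariance of the defining suprema.
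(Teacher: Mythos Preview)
The paper does not contain a proof of this lemma: it is quoted from Zhou, Lu and Zhang \cite[Lemma~1]{W. Zhang}, with Remark~\ref{IFL} recording that their construction yields the explicit upper bound $L_{2}(t)=M(t)+K(t)$. So there is no in-paper proof to compare against.

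That said, your approach---build a Lyapunov-type family of norms by taking suprema of $|T(\cdot,t)x|$ against the relevant exponential weights---is exactly the method used in \cite{W. Zhang}, so in spirit you are on the right track, and the sufficiency direction is fine as written. Two concrete issues with your necessity sketch, however. First, the lemma asserts that the \emph{same} exponents $\alpha$ and $\nu$ survive in the uniform estimates (and the paper relies on this in the Appendix), so your relaxation to $\alpha_{0}<\alpha$ and $\nu_{0}>\nu$ already gives a weaker conclusion; in fact the defining suprema are finite with $\alpha_{0}=\alpha$ and $\nu_{0}=\nu$, and that sharp choice is what produces the bound $L_{2}(t)=M(t)+K(t)$ of Remark~\ref{IFL}. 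Second, your claim that $\kappa=1$ and $\mu=1$ ignores the cross-terms between the three summands: for instance $|T(t,s)P(s)x|_{t}^{g}$ does not vanish and must be absorbed by $|x|_{s}^{s}$, and to bound $\|T(t,s)\|_{s,t}$ you must control $|T(t,s)x|_{t}^{u}$ on the range $r\in[0,t-s]$, which is the unstable component propagated \emph{forward}---the dichotomy gives no such bound, so here the growth summand $|\cdot|_{s}^{g}$ applied to $(I-P(s))x$ must intervene. These interactions force $\kappa,\mu>1$ and are precisely the ``technicalities'' you flag; the argument in \cite{W. Zhang} handles them, but with more careful book-keeping than your outline suggests.
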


\begin{remark}
\label{IFL}
A meticulous reading of the proof of the Uniformization Lemma \cite[pp.697--700]{W. Zhang} shows that the family of norms $\{|\cdot|_{t}\}$ verifies $|x|_{t}\leq L_{2}(t)|x|$, where $L_{2}(t)= M(t)+K(t)$.
\end{remark}

\begin{remark}
A version of the Uniformization Lemma, where the linear system (\ref{lin}) has the properties of $(Ke^{\varepsilon_{1} s},\gamma)$--nonuniform exponential dichotomy and full $(Me^{\varepsilon_{2} s},\nu)$--nonuniform bounded growth, has been used in \cite{Dragicevic}. Another version of the Uniformization Lemma has been introduced in \cite{Dragicevic2} for nonuniform polynomial dichotomies in evolution equations. 
\end{remark}

\section{nonuniform conjecture}
In order to set the problem of nonuniform nonautonomous stability, we have to recall a formal definition of nonuniform asymptotic stability. For this purpose, let us consider the nonautonomous and nonlinear system of ordinary differential equations:
\begin{equation}
\label{MY1}
\dot{x} = f(t,x)
\end{equation}
where $f\colon \mathbb{R}_{0}^{+}\times\mathbb{R}^{n}\to \mathbb{R}^{n}$ has properties ensuring the
existence, uniqueness and unbounded forward continuation of solutions. The solution of (\ref{MY1}) passing through $x_{0}$ at $t_{0}$ will be denoted
as $t\mapsto x(t,t_{0},x_{0})$. Moreover, we will assume that $f(t,0)=0$ for any
$t\in \mathbb{R}_{0}^{+}$.

In addition, it will be useful to
recall the comparison functions \cite{Khalil,Zhou}:
\begin{itemize}
    \item[$\bullet$] A function $\alpha\colon \mathbb{R}_{0}^{+}\to \mathbb{R}_{0}^{+}$ 
    is a $\mathcal{K}$ function if $\alpha(0)=0$ and it is nondecreasing.
    \item[$\bullet$] A function $\alpha\colon \mathbb{R}_{0}^{+}\to \mathbb{R}_{0}^{+}$ 
    is a $\mathcal{K}_{\infty}$ function if $\alpha(0)=0$, $\alpha(t)\to +\infty$ as $t\to +\infty$ and it is strictly increasing.
    \item[$\bullet$] A function $\alpha\colon \mathbb{R}_{0}^{+}\to (0,+\infty)$ 
    is a $\mathcal{N}$ function if it is nondecreasing.
    \item[$\bullet$] A function $\alpha(t,s)\colon \mathbb{R}_{0}^{+}\times \mathbb{R}_{0}^{+}\to \mathbb{R}_{0}^{+}$ 
    is a $\mathcal{KL}$ function if $\alpha(t,\cdot)\in \mathcal{K}$ and 
    $\alpha(\cdot,s)$ is decreasing with respect to $s$ and $\lim\limits_{s\to +\infty}\alpha(t,s)=0$.
\end{itemize}

\begin{definition}\cite{Zhou}
\label{GASNU}
The origin of (\ref{MY1}) is globally nonuniformly asymptotically stable if, for any $\eta>0$, exists $\delta(t_{0},\eta)>0$ such that
$$
|x_{0}|<\delta\Rightarrow|x(t,t_0,x_0)|<\eta,\quad \forall t\geq t_0\geq0
$$
and for any $x_{0}\in \mathbb{R}^{n}$ it follows that
$\lim\limits_{t\to +\infty}x(t,t_{0},x_{0})=0$, 
or equivalently, there exists 
 $\sigma \in \mathcal{KL}$ and  $\theta \in \mathcal{N}$ such that, for any $x_{0}\in \mathbb{R}^{n}$ it follows that
$$
|x(t,t_{0},x_{0})|\leq \sigma(\theta(t_{0})|x_{0}|,t-t_{0}).
$$
\end{definition}

As pointed out in \cite{Rugh}, the adjective \textit{nonuniform} relies in the fact that
the convergence of $t\mapsto x(t,t_{0},x_{0})$ is described by a function $\theta$ which depends explicitly of
the initial time $t_{0}$, that is, the decay of any solution is dependent $t_{0}$. In spite that this property and its consequences has been observed since the seminal works from Massera \cite{Massera}, its study from a dichotomy spectrum and/or differential Lyapunov inequalities perspectives has relaunched the interest on it.

\begin{remark}
 When (\ref{MY1}) is the linear system (\ref{lin}), it can be proved --with the help of $\mathcal{KL}$ functions-- that if the linear system has a $(Ke^{\varepsilon s},\gamma)$--nonuniform exponential contraction, then the origin is globally nonuniformly asymptotically stable. In fact, we have
 $\sigma(\theta(t_{0})|x_{0}|,t-t_{0})=\theta(t_{0})|x_{0}|e^{-\alpha(t-t_{0})}$ and $\theta(t_{0})=Ke^{\varepsilon t_{0}}$.
\end{remark}

\begin{remark}
The specific case of  $\theta(t_0)=K$ corresponds to the property of global uniform asymptotic stability. Notice that the Definition \ref{GASNU} is usually
known as global asymptotic stability in the literature and we added to adjective ``nonuniform" in order to contextualize with the rest of the work.
\end{remark}

\medskip 

\noindent\textbf{Statement of the conjecture:} As we have set forth the premises 
now we are able to state our nonuniform global stability problem.

\medskip

\begin{conjecture}[\bf{Nonuniform Markus--Yamabe Conjecture (NU--MYC)}]

Let us consider the nonlinear system
\begin{equation}
\label{MY}
\dot{x} = f(t,x)
\end{equation}
where $f\colon \mathbb{R}_{0}^{+}\times\mathbb{R}^{n}\to \mathbb{R}^{n}$. If $f$ satisfies the following conditions
\begin{itemize}
\item[\textbf{(G1)}] $f$ is continuous in $\mathbb{R}_{0}^{+}\times \mathbb{R}^{n}$ and $C^1$ with respect to $x$. Moreover, $f$ is such that the forward solutions are defined in $[t_{0},+\infty)$ for any $t_{0}\geq 0$.
\item[\textbf{(G2)}] $f(t,x)=0$ if $x=0$ for all $t\geq 0$.
\item[\textbf{(G3)}] For any piecewise continuous function  $t\mapsto \omega(t)$, the linear system
\begin{equation*}
\dot{\vartheta} = Jf(t,\omega(t))\vartheta,
\end{equation*}
where $Jf(t,\cdot)$ is the jacobian matrix of $f(t,\cdot)$, 
has a $(Ke^{\varepsilon s},\gamma)$--nonuniform exponential dichotomy spectrum satisfying
\begin{equation*}
\Sigma^{+}
(Jf(t,\omega(t))) \subset (-\infty,0).
\end{equation*}
\end{itemize}

Then the trivial solution of the nonlinear system (\ref{MY}) is globally nonuniformly asymptotically stable.
\end{conjecture}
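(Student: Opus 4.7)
The plan is to emulate the variation-along-a-solution paradigm of the classical Markus--Yamabe argument, replacing eigenvalue estimates by the nonuniform dichotomy spectrum and using the Uniformization Lemma to reduce the nonuniform estimates to tractable uniform ones.

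First, for local stability, I would apply (G3) with the piecewise continuous choice $\omega \equiv 0$: the variational system at the origin, $\dot{\vartheta} = Jf(t,0)\vartheta$, then satisfies $\Sigma^{+}(Jf(\cdot,0)) \subset (-\infty,0)$ and, by the characterization of nonuniform contractions through the spectrum, is a $(Ke^{\varepsilon s},\gamma)$--nonuniform exponential contraction. Writing $f(t,x) = Jf(t,0)x + r(t,x)$ with $|r(t,x)| = o(|x|)$ uniformly on compacts and combining this with a variation-of-constants estimate controlled by the contraction would yield a $\delta(t_{0},\eta)>0$ witnessing local nonuniform asymptotic stability of the origin; the same estimate forces uniqueness of the trivial equilibrium.

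Second, for global attractivity, I would fix an arbitrary forward solution $\varphi(t) = x(t,t_{0},x_{0})$ and, using (G2) together with the mean value theorem, rewrite it as a solution of the linear system
\begin{equation*}
\dot{\varphi}(t) = B(t)\varphi(t), \qquad B(t) := \int_{0}^{1} Jf(t,s\varphi(t))\, ds.
\end{equation*}
For each fixed $s \in [0,1]$ the path $\omega_{s}(t) = s\varphi(t)$ is continuous, hence piecewise continuous, so (G3) gives that $\dot{z} = Jf(t,s\varphi(t))z$ is a $(Ke^{\varepsilon s},\gamma)$--nonuniform exponential contraction. Applying the Uniformization Lemma separately for each $s$ produces a uniformly lower bounded continuous family of norms $\{|\cdot|^{(s)}_{t}\}_{t}$ in which this contraction becomes uniform, with rate $\alpha_{s}$ and constant $\kappa_{s}$. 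I would then assemble these into an averaged Lyapunov functional
\begin{equation*}
V(t,x) = \int_{0}^{1} \bigl(|x|^{(s)}_{t}\bigr)^{2}\, ds,
\end{equation*}
whose derivative along $\dot{y} = B(t)y$ decomposes as an integral over $s$ of the per-$s$ derivatives, each of which is negative by the corresponding uniform contraction. Transferring the resulting exponential decay of $V(t,\varphi(t))$ back to the Euclidean norm through the lower bound in (\ref{ineg1}) would give $|\varphi(t)| \to 0$, and combined with the first step this gives global nonuniform asymptotic stability of the origin.

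The main obstacle is the averaging step in the third paragraph. The norm families $\{|\cdot|^{(s)}_{t}\}_{t}$ delivered by the Uniformization Lemma, and the associated constants $(\kappa_{s},\alpha_{s})$, arise from a construction (see \cite[pp.\,697--700]{W. Zhang}) that depends implicitly on the whole linear system $Jf(\cdot,s\varphi(\cdot))$, so a uniform-in-$s$ lower bound on $\alpha_{s}$ and control of the growth bound $L_{2}^{(s)}(t) \leq M(t)+K(t)$ coming from Remark \ref{IFL} is not automatic. Closing it--either by extracting a common quadratic Lyapunov form from the dichotomy projectors $P^{(s)}(t)$ furnished by the spectral decomposition, or by exploiting continuity of $s \mapsto Jf(t,s\varphi(t))$ to select a single adapted norm family representative for the whole segment $s \in [0,1]$--is where the essential work lies, since this is precisely the point at which the nonuniform nature of (G3) departs from the uniform framework of \cite{CR}.
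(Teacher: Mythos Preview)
The statement you are trying to prove is labeled a \emph{Conjecture} in the paper, and the paper does not prove it in general: it only verifies the conjecture for $n=1$ (Theorem~\ref{N1}), for quasilinear perturbations of a nonuniformly contracting linear part (Theorem~\ref{T2}), and for triangular systems under additional boundedness hypotheses. There is therefore no proof in the paper to compare your proposal against; a correct argument here would settle the open problem the authors are posing.

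Your proposal has a genuine gap, and it lies one step \emph{before} the obstacle you flag. You claim that the derivative of $V(t,y)=\int_{0}^{1}\bigl(|y|_{t}^{(s)}\bigr)^{2}\,ds$ along $\dot{y}=B(t)y$ ``decomposes as an integral over $s$ of the per-$s$ derivatives, each of which is negative by the corresponding uniform contraction''. The decomposition is fine, but the negativity is not: the Uniformization Lemma produces a norm $|\cdot|_{t}^{(s)}$ adapted to the system $\dot{z}=Jf(t,s\varphi(t))z$, so that solutions of \emph{that} system decay in this norm. It tells you nothing about the behaviour of $|y(t)|_{t}^{(s)}$ when $y$ solves the \emph{different} system $\dot{y}=B(t)y$ with $B(t)=\int_{0}^{1}Jf(t,r\varphi(t))\,dr$. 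A Lyapunov norm adapted to one time-dependent coefficient matrix need not be dissipative for another, and passing from stability of each $Jf(t,s\varphi(t))$ to stability of their convex average $B(t)$ is precisely the hard step in all Markus--Yamabe type problems. Hence even with perfectly uniform-in-$s$ control of the constants $(\kappa_{s},\alpha_{s})$ your argument would not close; the obstruction is structural, not the technical uniformity issue you describe in your last paragraph.
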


Let us recall that the autonomous version of the Markus--Yamabe conjecture is stated in terms of the eigenvalues spectrum of the jacobian matrix corresponding to the linearized vector field. In this context, the assumption \textbf{(G3)} mimics the above fact and since both spectra (eigenvalues and nonuniform spectrum) belong to $(-\infty,0)$.

A uniform version of the above conjecture has been recently stated in \cite{CR}, where  the uniform conjecture was verified for scalar systems, triangular systems and a family of quasilinear systems where the nonlinearity has suitable properties.

\begin{remark}\label{RN1}
The \textnormal{\textbf{NU--MYC}} is verified for dimension $n=1$. In fact, the result follows the lines of \cite[Th.1]{CR} replacing the uniform context by the nonuniform one.
\end{remark}

\section{Triangular Vector Fields}

 The aim of this section is to prove \textbf{NU--MYC} for triangular vector fields, that is, to show that the origin is a globally nonuniformly asymptotically stable for (\ref{MY}) when $
f(t,x)$ is triangular vector field.  

As we said previously to Remark \ref{RN1}, in \cite{CR} it was proved that the uniform conjecture is verified for triangular systems and we emphasize that can be seen as a consequence of the scalar case combined with a result of F. Batelli and K.J. Palmer \cite{Batelli} for upper triangular systems whose diagonal subsystems have the uniform exponential dichotomy. In consequence, if we intend to emulate the ideas of the proof for the uniform triangular case, it is necessary to generalize the Batelli--Palmer result to a nonuniform framework.

A first step to cope with this problem is to 
study the relation between the nonuniform exponential dichotomy properties of the upper block triangular systems
\begin{equation}
\label{STB}
\dot{z}=\left[\begin{array}{cc}
A(t) & C(t)\\
0 & B(t)
\end{array}\right]z,
\end{equation}
 with the nonuniform exponential dichotomy properties of the subsystems 
\begin{equation}
\label{subsistemas}
\dot{x}=A(t)x \quad \textnormal{and} \quad \dot{y}=B(t)y,     
\end{equation}
where $x\in \mathbb{R}^{n}$, $y\in \mathbb{R}^{m}$, $z\in \mathbb{R}^{n+m}$, $A\in M_{n}(\mathbb{R})$, $B\in M_{m}(\mathbb{R}$) and $C\in M_{nm}(\mathbb{R})$. As we said, this problem has been addressed by F. Batelli and K.J. Palmer \cite{Batelli} in the context of the uniform exponential dichotomy on $\mathbb{R}_{0}^{+}$ with an extension to the discrete case in \cite{Batelli2}. 

In the case of the  $(K(s),\gamma)$--nonuniform exponential dichotomy on $\mathbb{R}_{0}^{+}$, a first approach was carried out by
L. Tien, L. Niehn and T. Chien in \cite{Tien}. In particular, the first result of \cite{Tien} is:
\begin{proposition}[Theorem 2.1 in \cite{Tien}]
\label{vietnam-0}
 If the system (\ref{STB}) has a $(K(s),\gamma)$--nonuniform exponential dichotomy on $\mathbb{R}_{0}^{+}$ then the decoupled subsystems (\ref{subsistemas}) also have a $(K(s),\gamma)$--nonuniform exponential dichotomy on $\mathbb{R}_{0}^{+}$.
\end{proposition}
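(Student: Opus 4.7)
The plan is to transfer the dichotomy of (\ref{STB}) to each diagonal subsystem by exploiting the block upper triangular shape of both the coefficient matrix and the evolution operator. I would begin by writing, via variation of parameters,
\[
T_{STB}(t,s)=\begin{pmatrix} T_A(t,s) & G(t,s) \\ 0 & T_B(t,s)\end{pmatrix},\qquad G(t,s)=\int_s^t T_A(t,r)C(r)T_B(r,s)\,dr,
\]
and decomposing the dichotomy projector $P(s)=(P_{ij}(s))_{i,j=1,2}$ in $2\times 2$ blocks. Expanding the invariance $T_{STB}(t,s)P(s)=P(t)T_{STB}(t,s)$ blockwise yields four algebraic identities; the key one is the intertwiner
\[
T_B(t,s)\,P_{21}(s)=P_{21}(t)\,T_A(t,s),
\]
which couples the candidate sub-projectors for the two diagonal subsystems.

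For the $B$-subsystem I would exploit that the canonical projection $\pi_B\colon\mathbb{R}^{n+m}\to\mathbb{R}^m$ is a morphism of evolution operators, $\pi_B T_{STB}(t,s)=T_B(t,s)\pi_B$, thanks to the upper triangular shape. Lifting each $y_0\in\mathbb{R}^m$ to $(0,y_0)^{\top}$, splitting by $P(s)$, and then applying $\pi_B$ produces the decomposition $y_0=P_{22}(s)y_0+(I-P_{22}(s))y_0$ together with the forward bound
\[
\|T_B(t,s)P_{22}(s)y_0\|\leq \|T_{STB}(t,s)P(s)(0,y_0)^{\top}\|\leq K(s)e^{-\gamma(t-s)}\|y_0\|,\quad t\geq s,
\]
and the mirror backward bound for $I-P_{22}(s)$ when $t\leq s$. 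Dually, for the $A$-subsystem I would use the canonical inclusion $i_A\colon x\mapsto(x,0)^{\top}$, for which $\mathbb{R}^n\times\{0\}$ is $T_{STB}$-invariant via $T_{STB}(t,s)\,i_A=i_A\,T_A(t,s)$, obtaining the analogous estimates with $P_{11}(s)$ in place of $P_{22}(s)$.

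The main obstacle is that $P_{22}$ and $P_{11}$ are not a priori idempotent: the block expansion of $P(s)^2=P(s)$ gives $P_{22}^2=P_{22}-P_{21}P_{12}$, so idempotency requires $P_{21}P_{12}=0$. The crux of the proof is therefore to show that $P_{21}\equiv 0$; once this holds, both $P_{11}(s)$ and $P_{22}(s)$ are genuine projectors, and the remaining blocks of the invariance equation collapse to the required relations $T_A(t,s)P_{11}(s)=P_{11}(t)T_A(t,s)$ and $T_B(t,s)P_{22}(s)=P_{22}(t)T_B(t,s)$. To force $P_{21}\equiv 0$, I would combine the intertwiner above with the a priori bound $\|P(t)\|\leq K(t)$ obtained by setting $t=s$ in the dichotomy, together with the forward decay already established for $P_{21}(s)x_0$, which, as the $B$-component of $P(s)(x_0,0)^{\top}$, sits in the stable part of the big system. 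I expect the delicate point to lie precisely here: balancing the intertwiner against these a priori bounds in the half-line setting, where only forward decay of the stable part and backward decay of the unstable part are at our disposal. Once $P_{21}\equiv 0$ is established, the dichotomy estimates and invariance for both diagonal subsystems drop out immediately with the same constants $K(s)$ and $\gamma$.
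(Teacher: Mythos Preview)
Your plan has a genuine gap at the very step you flag as the crux. On the half line $\mathbb{R}_0^+$ the dichotomy projector is \emph{not} unique: only $\rank P(s)$ is determined, while the kernel may be any invariant complement. In particular $P_{21}\equiv 0$ can fail for a legitimate dichotomy projector of (\ref{STB}). A concrete counterexample is $A(t)=1$, $B(t)=-1$, $C(t)=0$: the family
\[
P(t)=\begin{pmatrix}0&0\\ b\,e^{-2t}&1\end{pmatrix},\qquad b\neq 0,
\]
is an invariant projector yielding a (uniform) exponential dichotomy on $\mathbb{R}_0^+$, yet $P_{21}(t)=b\,e^{-2t}\not\equiv 0$. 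Your intertwiner $T_B(t,s)P_{21}(s)=P_{21}(t)T_A(t,s)$ and the bound $\|P(t)\|\le K(t)$ are both satisfied here, so no contradiction can be squeezed out of them. There is also a problem one step earlier: the ``dual'' argument for the $A$-subsystem via $i_A$ does not deliver the claimed estimate, since
\[
T_{STB}(t,s)P(s)\begin{pmatrix}x_0\\0\end{pmatrix}
=\begin{pmatrix}T_A(t,s)P_{11}(s)x_0+G(t,s)P_{21}(s)x_0\\ T_B(t,s)P_{21}(s)x_0\end{pmatrix},
\]
whose first component is \emph{not} $T_A(t,s)P_{11}(s)x_0$ unless $P_{21}(s)=0$; so you cannot read off $\|T_A(t,s)P_{11}(s)\|\le K(s)e^{-\gamma(t-s)}$ from the big dichotomy without that assumption.

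The paper does not prove this proposition itself but cites \cite{Tien}, and the surrounding discussion (the linking operator $\mathcal{L}$) indicates the intended route: one exploits the half-line freedom in the kernel of $P$ to \emph{choose} a block upper triangular projector from the outset, rather than trying to prove an arbitrary one already has $P_{21}=0$. Concretely, since $\mathbb{R}^n\times\{0\}$ is $T_{STB}$-invariant, one may select the complementary (unstable) subspace so that $\mathbb{R}^n\times\{0\}=(\rank P(0)\cap(\mathbb{R}^n\times\{0\}))\oplus(\ker P(0)\cap(\mathbb{R}^n\times\{0\}))$; this forces $P_{21}\equiv 0$, and then your $\pi_B$/$i_A$ arguments go through cleanly. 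Equivalently, following Battelli--Palmer one characterizes $\rank P(0)$ as $\{(x+\mathcal{L}y,y):x\in V_A,\;y\in V_B\}$ and extracts the subsystem projectors $P^A,P^B$ from $V_A,V_B$. Either way, the missing idea is to \emph{use} the non-uniqueness of half-line projectors rather than fight it.
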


As the $(Ke^{\varepsilon s},\gamma)$--nonuniform exponential dichotomy is a particular case of the $(K(s),\gamma)$ --nonuniform exponential dichotomy, the following Corollary is immediate:

\begin{corollary}
\label{vietnam-01}
 If the system (\ref{STB}) has a $(Ke^{\varepsilon s},\gamma)$--nonuniform exponential dichotomy on $\mathbb{R}_{0}^{+}$ then the decoupled subsystems (\ref{subsistemas}) also have a $(Ke^{\varepsilon s},\gamma)$--nonuniform exponential dichotomy  on $\mathbb{R}_{0}^{+}$.
\end{corollary}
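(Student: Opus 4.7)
The plan is to observe that the statement is a direct specialization of Proposition \ref{vietnam-0}. First I would note that the $(Ke^{\varepsilon s},\gamma)$--nonuniform exponential dichotomy introduced in Definition \ref{DENU-BV} coincides with the general $(K(s),\gamma)$--nonuniform exponential dichotomy when one chooses the particular growth rate $K(s) := Ke^{\varepsilon s}$. Since $K\geq 1$ and $\varepsilon\geq 0$, the map $s\mapsto Ke^{\varepsilon s}$ is continuous from $\mathbb{R}_0^+$ into $[1,+\infty)$, hence a legitimate growth rate in the sense required by the hypothesis of Proposition \ref{vietnam-0}.

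Starting from the assumption that system (\ref{STB}) has a $(Ke^{\varepsilon s},\gamma)$--nonuniform exponential dichotomy on $\mathbb{R}_0^+$, I would then apply Proposition \ref{vietnam-0} with this specific $K(s)$, obtaining that each of the two diagonal subsystems $\dot x = A(t)x$ and $\dot y = B(t)y$ admits a $(K(s),\gamma)$--nonuniform exponential dichotomy on $\mathbb{R}_0^+$ with the very same growth rate $s\mapsto Ke^{\varepsilon s}$ and the very same dichotomy constant $\gamma$. Reading Definition \ref{DENU-BV} in the opposite direction, this is precisely the assertion that each of $\dot x=A(t)x$ and $\dot y=B(t)y$ has a $(Ke^{\varepsilon s},\gamma)$--nonuniform exponential dichotomy on $\mathbb{R}_0^+$ with the original constants $K$, $\varepsilon$, $\gamma$ inherited unchanged.

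The only substantive point to verify — really a bookkeeping check rather than an obstacle — is that Proposition \ref{vietnam-0}, as stated in \cite{Tien}, transfers the growth rate $K(s)$ itself to the subsystems, and not merely some a priori larger growth rate derived from it. Since the statement of Proposition \ref{vietnam-0} as recalled above explicitly preserves the pair $(K(s),\gamma)$, reading off $K(s)=Ke^{\varepsilon s}$ for each subsystem is legitimate, and the corollary follows at once from this specialization.
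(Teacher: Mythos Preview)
Your proposal is correct and matches the paper's own reasoning: the paper states that the corollary is immediate because the $(Ke^{\varepsilon s},\gamma)$--nonuniform exponential dichotomy is a particular case of the $(K(s),\gamma)$--nonuniform exponential dichotomy, which is exactly the specialization of Proposition~\ref{vietnam-0} you carry out. Your additional verification that $s\mapsto Ke^{\varepsilon s}$ is a legitimate growth rate and that the constants transfer unchanged simply makes explicit what the paper leaves implicit.
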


Our next result will show the converse of Proposition \ref{vietnam-0},  which follows the lines of the result proved by Batelli and Palmer in \cite[Th.1]{Batelli} but imposes boundedness conditions for $C$ in terms of matrix norms $||\cdot||_{t,t}$ described in the subsection 2.2 and also incorporates a creative use of the Uniformization Lemma.

Let us recall we are assuming that (\ref{lin}) has the property of half $(Me^{\delta s},\nu)$--nonuniform bounded growth and a $(Ke^{\varepsilon s},\gamma)$--nonuniform exponential dichotomy on $\mathbb{R}_{0}^{+}$. Then, by using the Uniformization Lemma and Remark \ref{IFL}, we have that the continuous norms $\{|\cdot|_{t}\}$ satisfy the inequalities 
\begin{equation}
\label{ineg1-BV}
L_{1}|x|\leq |x|_{t} \leq \underbrace{(M+K)}_{:=L_{2}}e^{\theta t}|x| \quad \textnormal{for any $t\in \mathbb{R}_0^{+}$},
\end{equation}
for some $L_{1}>0$, where $\theta:=\max\{\delta,\varepsilon\}$ . By considering these specific norms, the inequalities (\ref{identidad2}) becomes
\begin{equation}
\label{identidad3}
\frac{1}{L}e^{-\theta t}||U||_{s,t}\leq ||U||\leq Le^{\theta s}||U||_{s,t}
\,\,
\textnormal{and}
\,\,\,
\frac{1}{L}e^{-\theta s}||U||\leq ||U||_{s,t}\leq Le^{\theta t}||U||,
\end{equation}
where $L=L_{2}/L_{1}$. Now, the converse result of Corollary \ref{vietnam-01} is given by:
\begin{theorem}
\label{vietnam}
Let us consider the upper block triangular system (\ref{STB}). If
\begin{itemize}
\item[i)] The decoupled systems (\ref{subsistemas}) have the property of half $(M e^{\ell s},\omega)$--nonuniform bounded growth 
and  half $(\tilde{M} e^{\tilde{\ell} s},\tilde{\omega})$--nonuniform bounded growth respectively on $\mathbb{R}_{0}^{+}$,
\item[ii)] The decoupled systems (\ref{subsistemas}) have the properties of $(K e^{\varepsilon s},\alpha)$--nonuniform exponential dichotomy 
and  $(\tilde{K} e^{\tilde{\varepsilon} s},\tilde{\alpha})$--nonuniform exponential dichotomy respectively on $\mathbb{R}_{0}^{+}$,
\item[iii)] The non diagonal block verifies
\begin{equation}
\label{bound-C}
||C||_{\tau,\infty}:=\sup\limits_{\tau\in \mathbb{R}_{0}^{+}}||C(\tau)||_{\tau,\tau}< \infty,
\end{equation}
where 
$$
||C(\tau)||_{\tau,\tau}= \sup\limits_{x\neq 0}\frac{|C(\tau)x|_{\tau}}{|x|_{\tau}} \quad \textnormal{with $|\cdot|_{\tau}$ verifying (\ref{ineg1-BV})},
$$
\end{itemize}
then (\ref{STB}) also has a $(\bar{K}e^{\bar{\varepsilon} s},\bar{\alpha})$-- nonuniform exponential dichotomy $\mathbb{R}_{0}^{+}$, 
where $\bar{K}$ is a constant dependent of $\alpha$ and $\tilde{\alpha}$, $\bar{\varepsilon}\geq0$ and $\bar{\alpha}=\min\{\alpha, \widetilde{\alpha}\}$, and a
half $(\bar{M} e^{\bar{\theta} s},\bar{\omega})$--nonuniform bounded growth. 
\end{theorem}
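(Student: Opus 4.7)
The strategy is to reduce this nonuniform triangular problem to the classical uniform Batelli--Palmer converse, using the Uniformization Lemma as the bridge. I would apply the Uniformization Lemma to each of the diagonal subsystems (\ref{subsistemas}) separately, obtaining continuous families of norms $\{|\cdot|^{(1)}_{t}\}$ on $\mathbb{R}^{n}$ and $\{|\cdot|^{(2)}_{t}\}$ on $\mathbb{R}^{m}$, each with a uniform lower bound and (by Remark \ref{IFL}) an upper bound of the form $L_{2,i}(t)=(M_{i}+K_{i})e^{\theta_{i}t}$, in which the corresponding evolution operators acquire a \emph{uniform} exponential dichotomy with rates $\alpha,\tilde\alpha$ and a half uniform bounded growth. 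I would then combine them into a single family on $\mathbb{R}^{n+m}$ via $|(x,y)|_{t}:=|x|^{(1)}_{t}+|y|^{(2)}_{t}$, which inherits a uniform lower bound $\min\{L_{1},\tilde L_{1}\}$ and an upper bound controlled by $e^{\theta t}$ with $\theta:=\max\{\ell,\tilde\ell,\varepsilon,\tilde\varepsilon\}$, so that the combined family still fits into (\ref{ineg1-BV}).

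In this combined parametrized norm the two diagonal blocks retain their uniform exponential dichotomies, and hypothesis (iii) gives precisely the uniform boundedness of the coupling $\sup_{t\geq0}\|C(t)\|_{t,t}<\infty$. Thus the triangular system (\ref{STB}) satisfies the hypotheses of the uniform Batelli--Palmer converse \cite[Th.1]{Batelli} with respect to $\{|\cdot|_{t}\}$, yielding a uniform exponential dichotomy for (\ref{STB}) with rate $\bar\alpha=\min\{\alpha,\tilde\alpha\}$ and a constant $\bar\kappa$ depending on $\alpha,\tilde\alpha$ and $\|C\|_{\tau,\infty}$, together with an invariant projection respecting the block upper-triangular decomposition. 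The reverse direction of the Uniformization Lemma, applied together with the norm conversion (\ref{identidad3}), then translates this back into a $(\bar K e^{\bar\varepsilon s},\bar\alpha)$--nonuniform exponential dichotomy of (\ref{STB}) on $\mathbb{R}_{0}^{+}$ in the Euclidean norm, with $\bar\varepsilon$ governed by $\theta$ and the constraint $\bar\varepsilon<\bar\alpha$ (from Definition \ref{DENU-BV}) enforced by shrinking $\bar\alpha$ if necessary.

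The half $(\bar M e^{\bar\theta s},\bar\omega)$--nonuniform bounded growth would be obtained directly from variation of parameters: the evolution operator of (\ref{STB}) has diagonal blocks $X(t,s)$ and $Y(t,s)$ controlled by hypothesis (i), and an off-diagonal block of the form $\int_{s}^{t}X(t,u)C(u)Y(u,s)\,du$ which can be estimated using (i) together with the Euclidean bound $\|C(u)\|\leq L e^{\theta u}\|C\|_{\tau,\infty}$ coming from (\ref{identidad3}); the resulting integrand produces an $e^{\bar\omega(t-s)}$ factor with an $e^{\bar\theta s}$ nonuniformity absorbed into $\bar M$. The main obstacle I expect is precisely the combined-norm bookkeeping: I must verify that the uniform lower bound of the product family stays strictly positive, that the invariant projection supplied by Batelli--Palmer on $\mathbb{R}^{n+m}$ is compatible with the block decomposition (so that the reverse Uniformization step is legitimate), and that the various rates $\ell,\tilde\ell,\varepsilon,\tilde\varepsilon$ and constants $K,\tilde K,M,\tilde M$ merge into a single pair $(\bar K,\bar\varepsilon)$ and $(\bar M,\bar\theta)$ while preserving the strict inequality $\bar\varepsilon<\bar\alpha$. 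The rest is careful tracking of the dependencies of the output constants on the input data.
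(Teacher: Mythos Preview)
Your high-level strategy coincides with the paper's: uniformize the two diagonal subsystems via the Uniformization Lemma, argue in the resulting parametrized norms, and convert back via (\ref{identidad3}). The difference is in the middle step, and that is where your proposal has a genuine gap.

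You want to invoke \cite[Th.~1]{Batelli} as a black box ``with respect to $\{|\cdot|_t\}$''. But that theorem is stated and proved for a single fixed norm, where one has the blanket submultiplicativity $\|UV\|\le\|U\|\,\|V\|$. In a time-parametrized family the correct inequality is $\|UV\|_{s,t}\le\|U\|_{r,t}\,\|V\|_{s,r}$ with the intermediate index forced to match, and hypothesis~(iii) only controls $\|C(\tau)\|_{\tau,\tau}$, i.e.\ the case where both indices equal $\tau$. Every integral in the Batelli--Palmer argument involves products of the form $X(t,\tau)\,[\,\cdot\,]\,C(\tau)\,[\,\cdot\,]\,Y(\tau,s)$, and to estimate them one must thread the indices so that $C(\tau)$ is flanked by $|\cdot|_{\tau}$ on both sides while the outer factors carry $(\tau,t)$ and $(s,\tau)$. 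This is exactly the content of the paper's Lemma~\ref{LTNU}, and the paper then \emph{redoes} the Batelli--Palmer computation explicitly in this setting: it writes down the projector $P(t)$ with off-diagonal block $R(t)$, bounds $R(t)$, verifies invariance, and estimates $W(t,s)P^{B}(s)+X(t,s)R(s)$ and its complementary analogue term by term (Lemmas~\ref{Lema-W}--\ref{DEIP}). None of this is a quotation of \cite{Batelli}; it is a re-derivation adapted to parametrized norms, and that re-derivation is the actual work. Your ``reverse direction of the Uniformization Lemma'' step is fine once one has the uniform dichotomy in $\{|\cdot|_t\}$, but getting there cannot be shortcut.

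For the half nonuniform bounded growth your variation-of-parameters idea is correct, though the paper again routes it through Lemma~\ref{LTNU} and the uniformized bounds (\ref{LUPBG}) rather than the Euclidean estimate $\|C(u)\|\le Le^{\theta u}\|C\|_{\tau,\infty}$; your version works but produces a larger $\bar\omega$ because the factors $e^{\ell u}$, $e^{\theta u}$ have to be absorbed into the growth rate via $e^{(\ell+\theta)u}\le e^{(\ell+\theta)(t-s)}e^{(\ell+\theta)s}$.
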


The proof of this result is plenty of bulky technicalities 
and the Uniformization Lemma combined with the functional setting from \cite{W. Zhang}
play a key role. In order to give continuity to the reading of the article, the proof is written in the Appendix.

\begin{remark}
A result related to Theorem \ref{vietnam}   was formulated  in terms of tempered exponential dichotomies by L. Barreira and C. Valls in \cite[Th.2.3]{BVTempered}. This result not includes the $(Ke^{\varepsilon s},\gamma)$--nonuniform exponential dichotomy considered in Theorem \ref{vietnam}. In fact, our nonuniform term  $D(s)=Ke^{\varepsilon s}$
with $\varepsilon>0$ verifies the asymptotic behavior
$$
\limsup\limits_{s\to \infty}\frac{D(s)}{s}=\varepsilon>0,
$$
which is just the opposite to the property considered in \cite{BVTempered}.
\end{remark}

\begin{remark}
By using the left inequality (\ref{identidad3}), we have that $||C(\tau)||_{\tau,\tau}\leq Le^{\theta \tau}||C(\tau)||$ for any $\tau\geq 0$.
In consequence, a sufficient condition ensuring (\ref{bound-C}) is given by
\begin{equation}
\label{condicionsuficiente}
\sup\limits_{\tau\in\mathbb{R}_{0}^{+}}Le^{\theta \tau}||C(\tau)||<\infty,
\end{equation} 
this means, for example, that if $C(\tau)=e^{-\psi\tau}C_{0}(\tau)$, where $\psi\geq\theta$ and $C_{0}$ is bounded on $\mathbb{R}_{0}^{+}$, then the last inequality is satisfied.
\end{remark}

The following scalar example indicates that the condition (\ref{condicionsuficiente}) is sufficient but not necessary in order to the expression (\ref{bound-C}) will be satisfied.

\begin{example}
Given $\tau\in\mathbb{R}_{0}^{+}$, let us consider $c(\tau):(\mathbb{R},|\cdot|_{\tau})\rightarrow(\mathbb{R},|\cdot|_{\tau})$ such that $\displaystyle\sup_{\tau\in\mathbb{R}_{0}^{+}}|c(\tau)|<\infty$, then by considering Remark \ref{ejemploCacotada} we have that
\begin{equation*}
\begin{array}{rcl}
 \|c(\tau)\|_{\tau,\tau}&=&\displaystyle\sup_{x\neq0}\frac{|c(\tau)x|_{\tau}}{|x|_{\tau}}=\displaystyle\sup_{x\neq0}\frac{h(\tau)|c(\tau)x|}{h(\tau)|x|}=|c(\tau)|
\end{array}
\end{equation*}
and we conclude that $\displaystyle\sup_{\tau\in\mathbb{R}_{0}^{+}}\|c(\tau)\|_{\tau,\tau}=\displaystyle\sup_{\tau\in\mathbb{R}_{0}^{+}}|c(\tau)|<\infty$.
\end{example}

\begin{corollary}
The upper block system
\begin{equation}
\label{STB-2}
\dot{x}=\left[\begin{array}{cccc}
A_{1}(t) & C_{12}(t) & \cdots & C_{1k}(t)\\
0 & A_{2}(t) &\cdots &  C_{2k}(t)\\
\vdots &  &  \ddots & \\
0 & \cdots & \cdots & A_{k}(t) 
\end{array}\right]
x  \quad 
\end{equation}
has a  $(Ke^{\varepsilon s},\alpha)$--nonuniform exponential dichotomy on $\mathbb{R}_{0}^{+}$ if all the diagonal systems
\begin{equation*}
\dot{x}_{i}=A_{i}(t)x_{i}  \quad \textnormal{for $i=1,\ldots,k$}
\end{equation*}
have a half nonuniform bounded growth property 
and a $(K_{i}e^{\varepsilon_{i}s},\alpha_{i})$--nonuniform exponential dichotomy on $\mathbb{R}_{0}^{+}$ (with $i=1,\ldots,k$) provided that the upper diagonal blocks verifies 
\begin{equation}
\label{bound-C+}
\sup\limits_{\tau\in \mathbb{R}_{0}^{+}}||C_{j}(\tau)||_{\tau,\tau}<\infty \quad  \textnormal{for any $j\in  \{2,\ldots,k\}$},
\end{equation}
where $C_{j}(t)$ is defined by 
\begin{displaymath}
C_{j}(t)=\left[\begin{array}{c}
C_{1j}(t)\\
C_{2j}(t)\\
\vdots \\
C_{j-1\,j}(t)
\end{array}\right].
\end{displaymath}
\end{corollary}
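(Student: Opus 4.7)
The plan is to argue by induction on the number $k$ of diagonal blocks. The base case $k=2$ is exactly Theorem \ref{vietnam}, which is essential to retain in its full strength: its conclusion delivers \emph{both} a $(\bar K e^{\bar\varepsilon s},\bar\alpha)$--nonuniform exponential dichotomy \emph{and} a half $(\bar M e^{\bar\theta s},\bar\omega)$--nonuniform bounded growth for the coupled system. This dual conclusion is what will make the induction propagate, even though the statement of the Corollary only advertises the dichotomy.

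For the inductive step I would strengthen the induction hypothesis to also assert half nonuniform bounded growth, and then rewrite (\ref{STB-2}) as the $2\times 2$ block system
$$
\dot{z}=\begin{pmatrix}\tilde{A}(t) & C_{k}(t) \\ 0 & A_{k}(t)\end{pmatrix}z,
$$
where $\tilde{A}(t)$ is the $(k-1)\times(k-1)$ upper block triangular matrix obtained by deleting the last row and column of blocks from the coefficient of (\ref{STB-2}), and $C_{k}(t)$ is the column defined at the end of the Corollary. The diagonal assumptions on $A_{1},\ldots,A_{k-1}$ together with the bounds (\ref{bound-C+}) for $j=2,\ldots,k-1$ constitute exactly the data required to invoke the inductive hypothesis on $\dot{x}=\tilde{A}(t)x$, yielding a $(\tilde{K}e^{\tilde{\varepsilon}s},\tilde{\alpha})$--nonuniform exponential dichotomy together with a half nonuniform bounded growth for $\tilde{A}$. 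Theorem \ref{vietnam} then applies to the pair of subsystems $\dot{x}=\tilde{A}(t)x$ and $\dot{y}=A_{k}(t)y$, its hypothesis (iii) being precisely the $j=k$ instance of (\ref{bound-C+}); its conclusion yields both properties for the full system (\ref{STB-2}), closing the induction.

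The main obstacle, in my view, is one of consistency of the parametrised families of norms $\{|\cdot|_{\tau}\}$ that are implicit in (\ref{bound-C+}): each application of the Uniformization Lemma inside Theorem \ref{vietnam} produces a family depending on the constants of the particular pair of subsystems being coupled, so the families used to control $C_{j}$ for different values of $j$ are a priori distinct. One must either read (\ref{bound-C+}) as asserting the bound in the specific family that arises at the induction step at which $C_{j}$ is absorbed, or else replace (\ref{bound-C+}) by the stronger but family--independent sufficient condition
$$
\sup_{\tau\in\mathbb{R}_{0}^{+}} e^{\theta_{j}\tau}\|C_{j}(\tau)\|<\infty
$$
for appropriate $\theta_{j}\geq 0$, as suggested by the Remark following Theorem \ref{vietnam} together with the left inequality in (\ref{identidad3}). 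The remaining routine point is to keep track, across the $k-1$ induction steps, of the accumulated constants $\bar{K}$, $\bar{\varepsilon}$, $\bar{\alpha}=\min_{i}\alpha_{i}$, and the growth exponent $\bar{\theta}$, but no sharp control on these is required for the stated conclusion.
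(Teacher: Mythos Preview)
Your proposal is correct and follows essentially the same recursive/inductive route as the paper: group the first $k-1$ diagonal blocks into a single subsystem $\tilde A$, apply the induction hypothesis to it, and then invoke Theorem~\ref{vietnam} with the pair $(\tilde A,A_k)$ and off-diagonal block $C_k$. You are in fact more careful than the paper in two respects: you make explicit that the half nonuniform bounded growth must be carried along in the (strengthened) induction hypothesis so that hypothesis~(i) of Theorem~\ref{vietnam} is available at each step, and you flag the genuine ambiguity about which parametrised family $\{|\cdot|_\tau\}$ is meant in (\ref{bound-C+}) at each level of the recursion --- a point the paper leaves implicit.
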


\begin{proof}
The proof will be carried out recursively.

\noindent If $k=2$, the result is an immediate consequence of Theorem \ref{vietnam} since the matrix of the system (\ref{STB-2}) has similar structure to (\ref{STB}) with
$A_{1}(t)=A(t)$, $A_{2}(t)=B(t)$ and $C_{12}(t)=C(t)$.

\noindent If $k=3$, the 
system (\ref{STB-2}) can be seen as having a similar structure that (\ref{STB}) with
\begin{displaymath}
A(t)=\left[\begin{array}{cc}
A_{1}(t) & C_{12}(t)\\
0 & A_{2}(t)
\end{array}\right], \quad 
B(t)=A_{3}(t) \quad \textnormal{and} \quad C(t)=C_{3}(t)=\left[\begin{array}{c}
C_{13}(t)\\
C_{23}(t)
\end{array}\right]
\end{displaymath}

Note that the subsystems $\dot{x}=A(t)x$
and $\dot{y}=B(t)y$ have a nonuniform exponential dichotomy
on $\mathbb{R}_{0}^{+}$. The first dichotomy property is a consequence of the case $k=2$ while the second one is an hypothesis. As (\ref{bound-C+}) is verified for $j=3$, Theorem \ref{vietnam} implies that the system (\ref{STB-2}) with $k=3$ has a nonuniform exponential dichotomy on $\mathbb{R}_{0}^{+}$ and the proof is achieved by a recursive way for $k\geq 4$.
\end{proof}

A particular --but important-- byproduct of the above Corollary is the following result when all the diagonal terms $A_{i}(t)$ are scalar functions: 
\begin{corollary}
\label{triangulaire}
The upper triangular system
\begin{equation}
\label{Triang}
\dot{x}=\left[\begin{array}{cccc}
a_{1}(t) & c_{12}(t) & \cdots & c_{1n}(t)\\
0 & a_{2}(t) &\cdots &  c_{2n}(t)\\
\vdots &  &  \ddots & \\
0 & \cdots & \cdots & a_{n}(t) 
\end{array}\right]
x  \quad 
\end{equation}
has a  $(Ke^{\varepsilon s},\alpha)$--nonuniform exponential dichotomy on $\mathbb{R}_{0}^{+}$ if all the scalar differential equations
\begin{equation}
\label{escalares}  
\dot{x}_{i}=a_{i}(t)x_{i}  \quad \textnormal{for $i=1,\ldots,k$}
\end{equation}
have a half ($M_ie^{\theta_{i}s},\omega_{i}$)--nonuniform bounded growth property 
and a $(K_{i}e^{\varepsilon_{i}s},\alpha_{i})$--nonuniform exponential dichotomy on $\mathbb{R}_{0}^{+}$ (with $i=1,\ldots,n$) provided that
\begin{equation}
\label{Bound-C-Tri}
\sup\limits_{\tau\in\mathbb{R}_{0}^{+}}|C_{j}(\tau)|_{\tau,\tau}=\sup\limits_{\tau\in\mathbb{R}_{0}^{+}}\left|\left(\begin{array}{c}
c_{1j}(\tau)\\
c_{2j}(\tau)\\
\vdots \\
c_{j-1\,j}(\tau)
\end{array}\right)\right|_{\tau,\tau}<\infty \quad  \textnormal{for any $j\in  \{2,\ldots,n\}$}.
\end{equation}
\end{corollary}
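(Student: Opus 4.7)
The plan is to deduce Corollary \ref{triangulaire} as the specialization of the preceding Corollary to the case where each diagonal block $A_i(t)$ in (\ref{STB-2}) is the scalar $a_i(t)$, so that $k=n$ and each block subsystem reduces to the one-dimensional equation (\ref{escalares}). Under this reduction, the hypotheses asserting the half nonuniform bounded growth and the $(K_i e^{\varepsilon_i s},\alpha_i)$--nonuniform exponential dichotomy of each scalar equation are exactly the hypotheses on the diagonal subsystems required by the preceding Corollary. Consequently, the only point that has to be established is that the hypothesis (\ref{Bound-C-Tri}), which bounds the parametrized vector norm of the column $C_j(\tau)\in\mathbb{R}^{j-1}$, implies the operator norm bound (\ref{bound-C+}).

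To effect this translation, I would exploit the fact that $C_j(\tau)$ acts as a linear map from $\mathbb{R}$ (the $j$-th coordinate) into $\mathbb{R}^{j-1}$ (the first $j-1$ coordinates). Since the domain is one-dimensional, every norm of the uniformized family $\{|\cdot|_{\tau}\}$ on $\mathbb{R}$ must be of the form $|x|_\tau=\lambda(\tau)|x|$ with $\lambda(\tau)>0$, and the uniform lower bound property (\ref{ineg1}) coming from the Uniformization Lemma forces $\lambda(\tau)\geq L_1>0$. From this, a direct computation of the supremum defining $\|C_j(\tau)\|_{\tau,\tau}$ gives
\begin{displaymath}
\|C_j(\tau)\|_{\tau,\tau}=\sup_{x\neq 0}\frac{|C_j(\tau)x|_\tau}{|x|_\tau}=\frac{|C_j(\tau)|_\tau}{\lambda(\tau)}\leq \frac{1}{L_1}|C_j(\tau)|_\tau.
\end{displaymath}
Taking the supremum in $\tau\geq 0$ and invoking (\ref{Bound-C-Tri}) then yields (\ref{bound-C+}), and the conclusion of the preceding Corollary delivers the $(Ke^{\varepsilon s},\alpha)$--nonuniform exponential dichotomy on $\mathbb{R}_0^+$ for (\ref{Triang}).

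The main thing one has to be careful about is the bookkeeping of the various parametrized norm families: each recursive application of Theorem \ref{vietnam} inside the proof of the preceding Corollary produces its own uniformized norm associated to the current upper-left block, and the vector norm $|\cdot|_\tau$ on $\mathbb{R}^{j-1}$ appearing in (\ref{Bound-C-Tri}) must be the one coming from the dichotomy data of the $(j-1)\times(j-1)$ upper-left block at the $j$-th stage of the recursion. Provided one fixes this convention at each step, the reduction above is the same at every level, and the uniform lower bound $L_1$ (possibly depending on the stage, but always strictly positive) guarantees the operator-norm condition is met. I do not expect any serious obstacle beyond this indexing, since the dichotomy and growth facts themselves are inherited directly from the hypothesis and from the preceding Corollary.
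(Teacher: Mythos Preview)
Your proposal is correct and follows the same route as the paper: the paper states this corollary simply as the special case of the preceding block-triangular Corollary when every diagonal block $A_i(t)$ is the scalar $a_i(t)$, and gives no separate proof. Your explicit verification that the vector-norm hypothesis (\ref{Bound-C-Tri}) yields the operator-norm hypothesis (\ref{bound-C+}) via $|x|_\tau=\lambda(\tau)|x|$ on $\mathbb{R}$ with $\lambda(\tau)\ge L_1$ is exactly the content the paper places in the Remark immediately following the corollary, so you have supplied in full the one step the paper only sketches.
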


\begin{remark}
Note that, when $j=2$ in (\ref{Bound-C-Tri}), we have
$$
\sup\limits_{\tau\in\mathbb{R}_{0}^{+}}|C_{2}(\tau)|_{\tau,\tau}=\sup\limits_{\tau\in\mathbb{R}_{0}^{+}}|c_{12}(\tau)|_{\tau,\tau}=\sup\limits_{\tau\in\mathbb{R}_{0}^{+}}\sup\limits_{x\neq0}\frac{|c_{12}(\tau)x|_{\tau}}{|x|_{\tau}}<\infty,
$$
and in this case, the family of norms $\{|\cdot|_{\tau}\}_{\tau\in\mathbb{R}_{0}^{+}}$ satisfies (\ref{normaparamdim1}).
\end{remark}

\begin{remark}
When we consider that $n=2$ in (\ref{Triang}), the scalar systems (\ref{escalares}) admit a $(K_ie^{\varepsilon_{i}s},\alpha_{i})$--nonuniform exponential dichotomy on $\mathbb{R}_{0}^{+}$, with $i=1, 2$  and as in Example \ref{ejemploCacotada}, we assume that $$\sup\limits_{\tau\in\mathbb{R}_{0}^{+}}|c_{12}(\tau)|<\infty,$$
then the Corollary \ref{triangulaire} allow us to ensure that the system  
\begin{equation*}
\dot{x}=\left[\begin{array}{cc}
a_{1}(t) & c_{12}(t)\\
0 & a_{2}(t)
\end{array}\right]
x
\end{equation*}
has a $(Ke^{\varepsilon s},\alpha)$--nonuniform exponential dichotomy on $\mathbb{R}_{0}^{+}$.
\end{remark}

\begin{remark}
We point out that in \cite{Palmer-82}, the author proves an analogous result to Corollary \ref{triangulaire} in a context of uniform exponential dichotomy on the half line. Nevertheless, we emphasize in the difference of our approach.
\end{remark}

\begin{lemma}
\label{dominancia}
Under the assumptions of Corollary \ref{vietnam-01} and Theorem \ref{vietnam}, the $(Ke^{\varepsilon s},\alpha)$--nonuniform exponential dichotomy spectrum of the upper block system (\ref{STB}) verifies
\begin{displaymath}
\Sigma^{+}(A)\cup \Sigma^{+}(B) =\Sigma^{+}(U) \quad \textnormal{with} \quad
U=\left(\begin{array}{cc}
A& C \\
0 & B
\end{array}\right).
\end{displaymath}
\end{lemma}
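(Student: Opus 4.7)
The plan is to work with the complements and use the defining characterization of the resolvent set: $\lambda\in\rho(U)$ iff the shifted system $\dot z=[U(t)-\lambda I]z$ admits a $(Ke^{\varepsilon s},\gamma)$--nonuniform exponential dichotomy on $\mathbb{R}_0^+$. A key observation is that shifting by $\lambda I$ preserves the upper block triangular structure: the diagonal blocks become $A(t)-\lambda I$ and $B(t)-\lambda I$, while the non-diagonal block $C(t)$ is left unchanged. Since $\Sigma^+(\cdot)=\mathbb{R}\setminus\rho(\cdot)$, the claimed equality $\Sigma^+(A)\cup\Sigma^+(B)=\Sigma^+(U)$ is equivalent to the set identity $\rho(U)=\rho(A)\cap\rho(B)$, and I would prove this by a double inclusion.

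For the forward inclusion $\rho(U)\subseteq\rho(A)\cap\rho(B)$, the argument is an immediate application of Corollary \ref{vietnam-01}: if $\lambda\in\rho(U)$, then the shifted upper block triangular system has a $(Ke^{\varepsilon s},\gamma)$--nonuniform exponential dichotomy on $\mathbb{R}_0^+$, so the two decoupled diagonal subsystems $\dot x=[A(t)-\lambda I]x$ and $\dot y=[B(t)-\lambda I]y$ inherit such a dichotomy, yielding $\lambda\in\rho(A)\cap\rho(B)$.

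For the reverse inclusion $\rho(A)\cap\rho(B)\subseteq\rho(U)$, I would invoke Theorem \ref{vietnam} applied to the shifted system. Given $\lambda\in\rho(A)\cap\rho(B)$, hypothesis (ii) of Theorem \ref{vietnam} holds by the very definition of the resolvent set. Hypothesis (i) is inherited from the standing bounded growth assumption on $A$ and $B$: the evolution operator of the shifted diagonal system is $e^{-\lambda(t-s)}T(t,s)$, whose norm is controlled by $Me^{\ell s}e^{(\omega+|\lambda|)(t-s)}$, so the half nonuniform bounded growth persists with the same nonuniform factor $Me^{\ell s}$ but a modified exponential rate. Hypothesis (iii), the boundedness of $C$ in the parametrized norm, must be checked with respect to the new family $\{|\cdot|_\tau^\lambda\}$ produced by the Uniformization Lemma applied to the shifted system; however, since shifting by $\lambda I$ leaves the nonuniform exponents $\delta$ and $\varepsilon$ unchanged and only alters $M,\omega,K,\alpha$, the parameter $\theta=\max\{\delta,\varepsilon\}$ in \textnormal{(\ref{ineg1-BV})} is the same, so the standing assumption $\sup_\tau\|C(\tau)\|_{\tau,\tau}<\infty$ transfers to the shifted system up to a multiplicative constant. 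Consequently Theorem \ref{vietnam} yields a $(\bar K e^{\bar\varepsilon s},\bar\alpha)$--nonuniform exponential dichotomy for $\dot z=[U(t)-\lambda I]z$, i.e.\ $\lambda\in\rho(U)$.

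The main obstacle, and the step that requires the most care, is precisely this last point: tracking how the parametrized norms coming from the Uniformization Lemma depend on $\lambda$ and confirming that the hypothesis on $C$ is uniform enough to survive an arbitrary real shift. The cleanest route is to pass through the sufficient condition stated in the remark after Theorem \ref{vietnam}, namely $\sup_\tau Le^{\theta\tau}\|C(\tau)\|<\infty$, expressed purely in terms of the standard operator norm. Since this condition is visibly invariant under scalar shifts, it transfers without loss to the shifted setting, and taking complements in the two established inclusions completes the proof.
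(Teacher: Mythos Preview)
Your approach is essentially identical to the paper's: both prove the equivalent identity $\rho(U)=\rho(A)\cap\rho(B)$ by double inclusion, using Corollary~\ref{vietnam-01} for $\rho(U)\subseteq\rho(A)\cap\rho(B)$ and Theorem~\ref{vietnam} for the reverse inclusion. The paper's proof is in fact terser than yours---it simply invokes Theorem~\ref{vietnam} for the shifted system without pausing to verify that hypotheses (i) and (iii) transfer under the shift $A\mapsto A-\lambda I$, $B\mapsto B-\lambda I$; your discussion of why the half nonuniform bounded growth and the boundedness condition on $C$ survive the shift (via the invariance of the nonuniform exponent $\theta$ and the remark following Theorem~\ref{vietnam}) is additional care that the paper omits.
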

\begin{proof}
Firstly, note that $ \rho(A)\cap \rho(B)$
is not empty. Indeed, otherwise, we will have that  $[\rho(A)\cap \rho(B)]^{c}=\Sigma^{+}(A)\cup \Sigma^{+}(B)=\mathbb{R}$ and then at least one spectrum is unbounded, obtaining a contradiction with Proposition \ref{comp-spec}.

Secondly, if
$\lambda \in \rho(A)\cap \rho(B)$ we have that the systems
\begin{equation}
\label{subsistemas-b}
\dot{x}=[A(t)-\lambda I]x \quad \textnormal{and} \quad \dot{y}=[B(t)-\lambda I]y     
\end{equation}
have a nonuniform exponential dichotomy and Theorem \ref{vietnam}
implies that $\lambda \in \rho(U)$ and consequently it follows that
$\rho(A)\cap \rho(B)\subset \rho(U)$.

Finally, if $\lambda \in \rho(U)$, the  Corollary \ref{vietnam-01} implies
that the subsystems (\ref{subsistemas-b}) have a nonuniform exponential dichotomy 
on $\mathbb{R}_{0}^{+}$ which is equivalent to $\lambda \in \rho(A)\cap \rho(B)$,
which implies that $\rho(U)\subset \rho(A)\cap \rho(B)$ and the result follows.
\end{proof}

Based on a recursive application of Lemma \ref{dominancia}, we obtain the following descriptions for the nonuniform exponential dichotomy spectrum for an upper block system as in (\ref{STB-2}) and for an upper triangular system as in (\ref{Triang}) respectively.

\begin{corollary}
Under the assumptions of Corollary \ref{dominancia}, the nonuniform exponential dichotomy spectrum of the upper block system (\ref{STB-2}) is described by
$$
\Sigma^{+}(A_{1})\cup \Sigma^{+}(A_{2})\cup \cdots \cup\Sigma^{+}(A_{k}).
$$
\end{corollary}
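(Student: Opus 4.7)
The plan is to prove the corollary by induction on the number $k$ of diagonal blocks. The base case $k=2$ is precisely the content of Lemma \ref{dominancia}, so nothing new is required there. For the inductive step I would regroup the coefficient matrix of (\ref{STB-2}) as a $2\times 2$ block matrix
\begin{displaymath}
U(t)=\left(\begin{array}{cc}
\widetilde{U}(t) & \widetilde{C}(t) \\
0 & A_{k}(t)
\end{array}\right),
\end{displaymath}
where $\widetilde{U}(t)$ is the $(k-1)$--block principal submatrix built from $A_{1}(t),\ldots,A_{k-1}(t)$ together with the corresponding upper off-diagonal blocks, and $\widetilde{C}(t)$ is the column of blocks $C_{1k}(t),\ldots,C_{k-1\,k}(t)$. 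Lemma \ref{dominancia} applied to this decomposition gives $\Sigma^{+}(U)=\Sigma^{+}(\widetilde{U})\cup \Sigma^{+}(A_{k})$, and the inductive hypothesis then identifies $\Sigma^{+}(\widetilde{U})$ with $\Sigma^{+}(A_{1})\cup \cdots \cup \Sigma^{+}(A_{k-1})$, closing the induction.

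Before Lemma \ref{dominancia} can be invoked, its hypotheses have to be checked at every recursive step. What is needed, via Corollary \ref{vietnam-01} and Theorem \ref{vietnam}, is that both $\dot{x}=\widetilde{U}(t)x$ and $\dot{y}=A_{k}(t)y$ enjoy the half nonuniform bounded growth property together with a $(Ke^{\varepsilon s},\alpha)$--nonuniform exponential dichotomy, and that the coupling block $\widetilde{C}$ satisfies the boundedness assumption analogous to (\ref{bound-C}). For $\dot{y}=A_{k}(t)y$ both properties are explicit hypotheses of the corollary. For $\dot{x}=\widetilde{U}(t)x$, these two properties are precisely what the Corollary immediately following Theorem \ref{vietnam} delivers, once it is applied to the $(k-1)$--block truncation of (\ref{STB-2}); the off-diagonal hypothesis (\ref{bound-C+}) restricted to indices $j\in\{2,\ldots,k-1\}$ provides the input for that application.

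The main obstacle I anticipate is the bookkeeping of the parametrized norm families. The condition (\ref{bound-C+}) is phrased through a family $\{|\cdot|_{\tau}\}$ provided by the Uniformization Lemma, and a priori the family attached to $\widetilde{U}$, to $A_{k}$, and to $U$ need not coincide. To dispatch this, I would follow Theorem \ref{vietnam} verbatim at each recursion level and rely on Remark \ref{IFL}, which pins down $L_{2}(t)=M(t)+K(t)$ and therefore gives the concrete estimates (\ref{identidad3}) linking the parametrized and standard operator norms. This way the boundedness of $\widetilde{C}(\tau)$ in the norm native to the current inductive step follows from the hypothesis phrased in the norms associated to the full system $U$ via the uniform comparison constants. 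Once that translation is carried out, the induction closes with no further analytic input.
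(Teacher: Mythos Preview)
Your proposal is correct and follows essentially the same route as the paper, which simply states that the result is obtained ``based on a recursive application of Lemma \ref{dominancia}''; your induction on $k$ with the $2\times 2$ regrouping $\widetilde{U}/A_{k}$ is precisely that recursion. You are in fact more careful than the paper: the bookkeeping issue you flag about the parametrized norm families (the family produced by the Uniformization Lemma for $\widetilde{U}$ versus the one in which (\ref{bound-C+}) is stated) is a genuine technical point that the paper glosses over, and your proposed remedy via Remark \ref{IFL} and (\ref{identidad3}) is the natural way to close it.
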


\begin{corollary}
\label{dominancia3}
Under the assumptions of Corollary \ref{triangulaire}, the nonuniform exponential dichotomy spectrum of the upper triangular system (\ref{Triang}) is described by
$$
\Sigma^{+}(a_{1})\cup \Sigma^{+}(a_{2})\cup \cdots \cup\Sigma^{+}(a_{n}).
$$
\end{corollary}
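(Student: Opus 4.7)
The plan is to proceed by induction on the dimension $n$, using Lemma \ref{dominancia} as the engine of the recursion. The base case $n=1$ is immediate, since the upper triangular system (\ref{Triang}) reduces to the scalar equation $\dot{x}_{1}=a_{1}(t)x_{1}$, so $\Sigma^{+}(U)=\Sigma^{+}(a_{1})$ trivially.

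For the inductive step, I would split the $n\times n$ upper triangular system into the block form (\ref{STB}) by setting
\begin{displaymath}
A(t)=\begin{pmatrix}
a_{1}(t) & c_{12}(t) & \cdots & c_{1,n-1}(t)\\
0 & a_{2}(t) & \cdots & c_{2,n-1}(t)\\
\vdots & & \ddots & \vdots\\
0 & \cdots & 0 & a_{n-1}(t)
\end{pmatrix},\quad B(t)=a_{n}(t),\quad C(t)=\begin{pmatrix}c_{1n}(t)\\ \vdots\\ c_{n-1,n}(t)\end{pmatrix}.
\end{displaymath}
The hypotheses of Corollary \ref{triangulaire} restricted to $a_{1},\ldots,a_{n-1}$, together with the boundedness condition (\ref{Bound-C-Tri}) for $j\in\{2,\ldots,n-1\}$, continue to hold for the reduced system $\dot{x}=A(t)x$. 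By Corollary \ref{triangulaire} applied inductively, this reduced system has a nonuniform exponential dichotomy on $\mathbb{R}_{0}^{+}$ and a half nonuniform bounded growth. The scalar equation $\dot{y}=B(t)y=a_{n}(t)y$ already possesses those properties by assumption, and condition (\ref{Bound-C-Tri}) for $j=n$ is exactly the hypothesis (\ref{bound-C}) on the off-diagonal block $C$ required by Theorem \ref{vietnam}. All ingredients of Lemma \ref{dominancia} being in place, we obtain $\Sigma^{+}(U)=\Sigma^{+}(A)\cup\Sigma^{+}(B)$, and the inductive hypothesis applied to $\Sigma^{+}(A)$ closes the argument
\begin{displaymath}
\Sigma^{+}(U)=\Sigma^{+}(a_{1})\cup\cdots\cup\Sigma^{+}(a_{n-1})\cup\Sigma^{+}(a_{n}).
\end{displaymath}

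The main obstacle is not conceptual but bookkeeping: at each recursive stage one must verify that the assumptions of Corollary \ref{triangulaire} transfer to the upper-left $(n-1)\times(n-1)$ submatrix. The growth and dichotomy for the reduced triangular block are provided by Corollary \ref{triangulaire} itself invoked inductively, whereas the columns of the smaller system inherit the estimates (\ref{Bound-C-Tri}) verbatim from the original condition, since the restriction of the parametrized norm $|\cdot|_{\tau}$ to the lower-dimensional subspace remains a uniformly lower bounded continuous family of norms.
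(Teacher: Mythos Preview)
Your proof is correct and follows the same route as the paper: the authors merely state that Corollary \ref{dominancia3} (together with the preceding block version) is obtained ``based on a recursive application of Lemma \ref{dominancia}'', and you have written out precisely that induction, peeling off the last column at each step and invoking Corollary \ref{triangulaire} to supply the dichotomy and bounded growth of the $(n-1)\times(n-1)$ corner. Your closing paragraph on the bookkeeping of the parametrized norms even makes explicit a point the paper leaves implicit.
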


\begin{remark}
The previous Corollary says that the nonuniform exponential dichotomy spectrum of an upper triangular system coincides with the union of the spectra of the scalar equations (\ref{escalares}). This property is known as diagonal significance and was introduced for the discrete uniform exponential dichotomy spectrum by C. P\"otzsche in \cite{CP}. This fact is immediate in the autonomous case, while - counterintuitively - in the nonautonomous framework is not always verified.   
\end{remark}

Theorem \ref{vietnam} and its consequences provide 
the framework to state and prove our main result, namely, the nonautonomous nonuniform Markus--Yamabe conjecture is verified for triangular system of nonautonomous differential equations whose nondiagonal parts satisfy boundedness conditions described in terms of parametrized norms.

\begin{theorem}
Let us consider the triangular system
\begin{equation}
\label{triangular}
\begin{array}{rcr}
\dot{x}_{1}&=&f_{1}(t,x_{1},x_{2},\ldots,x_{n})\\
\dot{x}_{2}&=&f_{2}(t,x_{2},\ldots,x_{n})\\
 &\vdots&  \\
\dot{x}_{n}&=&f_{n}(t,x_{n}),
\end{array}
\end{equation}  
whose right part, namely $F(t,x)$, verifies \textbf{(G1)} and \textbf{(G2)}. If
for any piecewise continuous function $t\mapsto \theta(t)$ it is verified that
\begin{itemize}
\item[(a)] There exist constants $k_{i}\geq 1$, $\alpha_{i}>0$ and $\varepsilon_{i}\geq0
$, with $\varepsilon_{i}<\alpha_{i}$ such that
$$
\int_{s}^{t}\frac{\partial f_{i}}{\partial x_{i}}(\tau,\theta(\tau))\,d\tau \leq \ln(k_{i})-\alpha_{i}(t-s)+\varepsilon_{i}s
\quad \textnormal{for any $t\geq s\geq 0$ and $i=1,\ldots,n$}.
$$
\item[(b)] For any $j\in \{2,\ldots,n\}$ and any piecewise continuous function $t\mapsto \theta(t)$, the partial derivatives verify
\begin{displaymath}
\displaystyle
\sup\limits_{\tau\in\mathbb{R}_{0}^{+}}\left|\left(\begin{array}{c}
\frac{\partial f_{1}(\tau,\theta(\tau))}{\partial x_{j}}\\\\
\frac{\partial f_{2}(\tau,\theta(\tau))}{\partial x_{j}}\\
\vdots \\
\frac{\partial f_{j-1}(\tau,\theta(\tau))}{\partial x_{j}} 
\end{array}\right)\right|_{\tau,\tau}<\infty.
\end{displaymath}
\end{itemize}
then the trivial solution of (\ref{triangular}) is globally nonuniformly asymptotically stable.
\end{theorem}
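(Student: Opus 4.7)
The plan is to use the mean value theorem to recast every orbit of (\ref{triangular}) as the trajectory of a time--dependent upper--triangular \emph{linear} system, and then to invoke Corollary \ref{triangulaire} and Corollary \ref{dominancia3}. Since $F(t,0)=0$ by \textbf{(G2)}, any solution $x(t)=x(t;t_0,x_0)$ satisfies
\begin{equation*}
\dot x(t) \;=\; F(t,x(t)) \;=\; \Bigl(\int_{0}^{1} JF(t,u\,x(t))\,du\Bigr) x(t) \;=:\; \tilde A(t)\, x(t),
\end{equation*}
and the matrix $\tilde A(t)$ is upper--triangular because $JF$ is. If $\dot w = \tilde A(t) w$ admits a $(\bar K\, e^{\bar\varepsilon s},\bar\alpha)$--nonuniform exponential contraction on $\mathbb{R}_{0}^{+}$ with constants independent of the particular solution $x$, then
\begin{equation*}
|x(t)| \;\le\; \bar K\, e^{\bar\varepsilon\, t_0}\, e^{-\bar\alpha\,(t-t_0)}\, |x(t_0)|, \qquad t\ge t_0\ge 0,
\end{equation*}
which is exactly global nonuniform asymptotic stability of the origin.

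To apply Corollary \ref{triangulaire} to $\tilde A(t)$, I would verify its three hypotheses. The $i$-th diagonal entry is $\tilde a_i(t)=\int_{0}^{1}\partial_{x_i}f_i(t,u\,x_i(t),\ldots,u\,x_n(t))\,du$; for each fixed $u\in[0,1]$ the curve $\tau\mapsto (u\,x_i(\tau),\ldots,u\,x_n(\tau))$ is piecewise continuous, so hypothesis (a) applies with the \emph{same} constants $k_i,\alpha_i,\varepsilon_i$. Swapping integrals via Fubini yields
\begin{equation*}
\int_{s}^{t}\tilde a_i(\tau)\,d\tau \;\le\; \ln k_i - \alpha_i(t-s) + \varepsilon_i\, s,\qquad t\ge s\ge 0,
\end{equation*}
which furnishes both a half nonuniform bounded growth and a $(k_i e^{\varepsilon_i s},\alpha_i)$--nonuniform exponential contraction (with projector the identity) for every scalar diagonal piece. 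For the off--diagonal columns $\tilde C_j(t)=\int_{0}^{1}(\partial_{x_j}f_1,\ldots,\partial_{x_j}f_{j-1})^{\top}(t,u\,x(t))\,du$, hypothesis (b) applied to each path $\tau\mapsto u\,x(\tau)$, combined with Minkowski's inequality under the $u$--integral, gives $\sup_{\tau\ge 0}|\tilde C_j(\tau)|_{\tau}<\infty$, matching (\ref{Bound-C-Tri}). Corollary \ref{triangulaire} then provides a nonuniform exponential dichotomy for $\dot w=\tilde A(t)w$, and Corollary \ref{dominancia3} identifies its spectrum as $\bigcup_{i=1}^{n}\Sigma^{+}(\tilde a_i)\subset(-\infty,0)$, so the dichotomy is in fact a contraction with $P=I$.

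The main obstacle I expect is verifying that the dichotomy constants $\bar K,\bar\varepsilon,\bar\alpha$ returned by Corollary \ref{triangulaire} are \emph{uniform} in the orbit $x(\cdot)$. On the diagonal this is immediate, since hypothesis (a) is stated uniformly in $\theta$ and every $\tilde a_i$ inherits the same $k_i,\alpha_i,\varepsilon_i$. Hypothesis (b), however, only gives finiteness of $\sup_\tau|\cdot|_\tau$ for each individual $\theta$, and the parametrized norms $\{|\cdot|_\tau\}$ themselves are manufactured from the scalar dichotomies via the Uniformization Lemma. The delicate point is to trace the proof of Theorem \ref{vietnam} in the Appendix and confirm that, because the scalar dichotomy data in (a) is uniform in $\theta$, the norms $\{|\cdot|_\tau\}$ can be chosen once and for all, so the supremum in (b) depends on $\theta$ only through the integrand and not through the norm structure. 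With this uniformity secured, the identity $x(t)=\tilde T(t,t_0)x(t_0)$ combined with the contraction bound on $\tilde T$ closes the proof.
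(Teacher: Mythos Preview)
Your route differs genuinely from the paper's. You linearize each orbit via the integral mean value identity $\dot x=\tilde A(t)x$ with $\tilde A(t)=\int_0^1 JF(t,u\,x(t))\,du$ and then invoke Corollary~\ref{triangulaire} and Corollary~\ref{dominancia3} to produce a nonuniform contraction estimate directly on the full $n$-dimensional flow. The paper does use Corollary~\ref{dominancia3}, but only to verify \textbf{(G3)}; for the stability conclusion itself it argues by \emph{back-substitution through the cascade}: the last equation $\dot x_n=f_n(t,x_n)$ is scalar and falls under Theorem~\ref{N1}, yielding $\phi_n(t)\to 0$; this is then fed into the $(n{-}1)$-st equation, which becomes a scalar equation in $x_{n-1}$, and Theorem~\ref{N1} is invoked again; and so on recursively up to $x_1$. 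Thus the paper never applies the triangular dichotomy machinery to the nonlinear flow---it reduces everything to a sequence of one-dimensional problems.

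What each approach buys: the paper's recursion draws the stability constants straight from hypothesis~(a), which is stated with $k_i,\alpha_i,\varepsilon_i$ independent of~$\theta$, so the uniformity-in-orbit issue you correctly flag simply does not arise there (hypothesis~(b) enters only in the \textbf{(G3)} verification, where orbit-dependence of the bound is harmless). Your argument is conceptually tidier and exploits the triangular linear theory more fully, but the contraction constant $\bar K$ produced by Corollary~\ref{triangulaire} depends on $\|C\|_{\tau,\infty}$, which via~(b) may vary with the orbit. Your observation that the parametrized norms $\{|\cdot|_\tau\}$ are manufactured only from the diagonal data in~(a), and are therefore orbit-independent, is correct and necessary---but it does not by itself bound $\sup_\tau|\tilde C_j(\tau)|_\tau$ uniformly in $x_0$. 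Without that extra uniformity you obtain global attractivity but not the stability clause of the definition; to close the gap you would need either to read~(b) as uniform in~$\theta$ (parallel to your reading of~(a)) or to fall back on the paper's cascade argument for the stability part.
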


\begin{proof}
Firstly, we will prove that \textbf{(G3)} is verified. In fact, the statement (a) implies that, for any $i=1,\ldots,n$, the nonuniform exponential dichotomy spectra of the differential equations 
$$
\dot{x}=\frac{\partial f_i}{\partial x_i}(t,\theta(t))x
$$
verifies
\begin{equation}
\label{sp1}
\Sigma^{+}\left[\frac{\partial f_i}{\partial x_i}(t,\theta(t))\right] \subset (-\infty, 0).
\end{equation}
for any piecewise continuous function $t\mapsto \theta(t)=(\theta_{1}(t),\ldots,\theta_{n}(t))$.

On the other hand, let us recall
that the jacobian matrix $JF$ is upper triangular defined by
\begin{displaymath}
JF(t,x_{1},\ldots,x_{n})_{ij}=\left\{
\begin{array}{ccl}
\frac{\partial f_{i}}{\partial x_{j}}(t,x_{i},\ldots,x_{n}) &\textnormal{if}& i\leq j, \\
0 &\textnormal{if} & i>j.
\end{array}\right.
\end{displaymath}

Then, the statement (b) combined with (\ref{sp1}) and Corollary \ref{dominancia3} imply that
\begin{eqnarray*}
\Sigma^{+}\left [JF(t,\theta(t)) \right ]= \bigcup\limits_{i=1}^{n} \Sigma^{+}\left[\frac{\partial f_i}{\partial x_i}(t,\theta(t))\right]\subset (-\infty,0)
\end{eqnarray*}
and \textbf{(G3)} follows.

Let $t\mapsto (x_1(t),x_2(t),\ldots,x_n(t))$ be a solution of (\ref{triangular}) passing through $(x_{1}^0,x_{2}^0, \ldots x_{n}^0)$ at $t=t_{0}$. Note that the scalar equation
$$
\dot{x}_{n}=f_{n}(t,x_{n}) \quad \textnormal{with} \quad x_{n}(t_{0})=x_{n}^{0}
$$
is a subsystem of (\ref{triangular}) whose solution is denoted by $\phi_{n}(t):=x_{n}(t,t_{0},x_{n}^{0})$ and verifies $\lim\limits_{t\to \infty}\phi_{n}(t)=0$ globally and nonuniformly, as we can see by Remark \ref{RN1}.

Now, we can see that the last two equations of (\ref{triangular}) are 
\begin{displaymath}
\left\{\begin{array}{rcl}
\dot{x}_{n-1}&=&f_{n-1}(t,x_{n-1},x_{n})\\
\dot{x}_{n}&=&f_{n}(t,x_{n})
\end{array}\right.
\end{displaymath}
with initial conditions $(x_{n-1}^{0},x_{n}^{0})$ at $t=t_{0}$. The solution of this system is denoted by
$(\phi_{n-1}(t),\phi_{n}(t))$, where $\phi_{n}$ is defined above and $\phi_{n-1}$ is the solution of the scalar equation
$$
\dot{x}_{n-1}=f_{n}(t,x_{n-1},\phi_{n}(t)) \quad \textnormal{with} \quad x_{n-1}(t_{0})=x_{n-1}^{0}
$$
 and, as before, also verifies $\lim\limits_{t\to \infty}\phi_{n-1}(t)=0$ globally and nonuniformly, as we can see again by Remark \ref{RN1}. The rest of the proof can be achieved in a recursive way.
\end{proof}

\appendix

\section{Proof of Theorem \ref{vietnam}}
\subsection{Preliminaries}
By hypothesis, we know that the linear systems
$\dot{x}=A(t)x$ and $\dot{y}=B(t)y$ have a
$(Ke^{\varepsilon s},\alpha)$ and $(\tilde{K}e^{\tilde{\varepsilon}s},\tilde{\alpha})$--nonuniform
exponential dichotomy on $\mathbb{R}_{0}^{+}$ with projectors
$P^{A}(\cdot)$ and $P^{B}(\cdot)$ respectively: 
\begin{equation}
\label{DD}
\begin{array}{rcl}
||X(t,s)P^{A}(s)|| &\leq & Ke^{-\alpha(t-s)+\varepsilon s} \quad \textnormal{for $t\geq s\geq 0$},\\
||X(t,s)[I_{n}-P^{A}(s)]||&\leq & Ke^{-\alpha(s-t)+\varepsilon s} \quad \textnormal{for $s\geq t\geq 0$},
\end{array}
\end{equation}
and
\begin{equation}
\label{DDD}
\begin{array}{rcl}
||Y(t,s)P^{B}(s)|| &\leq & \tilde{K}e^{-\widetilde{\alpha}(t-s)+\tilde{\varepsilon}s} \quad \textnormal{for $t\geq s\geq 0$},\\
||Y(t,s)[I_{m}-P^{B}(s)]||&\leq & \tilde{K}e^{-\widetilde{\alpha}(s-t)+\tilde{\varepsilon}s} \quad \textnormal{for $s\geq t\geq 0$},
\end{array}
\end{equation}
where $X(t,s)$ and $Y(t,s)$ are its corresponding evolution operators. We also know that the above subsystems have the properties of half $(Me^{ \ell s},\omega)$ and half $(\tilde{M}e^{\tilde{\ell} s},\tilde{\omega})$--nonuniform bounded growth on $\mathbb{R}_{0}^{+}$ respectively, that is
\begin{equation}
\label{HNBG}
||X(t,s)|| \leq  Me^{\omega(t-s)+\ell s} \quad \textnormal{and} \quad
||Y(t,s)||\leq  \tilde{M}e^{\tilde{\omega}(t-s)+\tilde{\ell} s} \quad \textnormal{for $t\geq s\geq 0$},
\end{equation}
then, the Uniformization Lemma can be applied to both subsystems. On one hand, the dichotomy estimations become
$$
\begin{array}{rr}
\left\|X(t, s) P^{A}(s)\right\|_{s, t} \leq \kappa e^{-\alpha(t-s)} & \textnormal{ for } t \geq s\geq0,  \\
\left\|X(t, s) [I-P^{A}(s)]\right\|_{s, t} \leq \kappa e^{-\alpha(s-t)} & \textnormal{ for } s \geq t\geq0, \end{array}
$$
and
$$
\begin{array}{rr}
\left\|Y(t, s) P^{B}(s)\right\|_{s, t} \leq \tilde{\kappa} e^{-\tilde{\alpha}(t-s)} & \textnormal{ for } t \geq s\geq0,  \\
\left\|Y(t, s) [I-P^{B}(s)]\right\|_{s, t} \leq \tilde{\kappa} e^{-\tilde{\alpha}(s-t)} & \textnormal{ for } s \geq t\geq 0,
\end{array}
$$
while the half bounded growth properties become
\begin{equation}
\label{LUPBG}
\|X(t, s)\|_{s, t} \leq \mu e^{\omega(t-s)} \quad \textnormal{and} \quad \|Y(t, s)\|_{s, t} \leq \tilde{\mu} e^{\tilde{\omega}(t-s)}\quad t \geq s\geq 0.
\end{equation}

\bigskip

The Uniformization Lemma also ensures the existence of two family of norms: a family $\{|\cdot|_{t}^{A}\}_{t}$ in $\mathbb{R}^{n}$ and $\{|\cdot|_{t}^{B}\}_{t}$ in $\mathbb{R}^{m}$. The inequalities (\ref{ineg1}) are verified with $L_{1}^{A}$,$L_{1}^{B}$,$L_{1}^{A}(t)$ and $L_{2}^{B}(t)$ respectively. By Remark \ref{IFL}, we know that
\begin{displaymath}
L_{2}^{A}(t)=Me^{\ell t}+Ke^{\varepsilon t} \quad \textnormal{and} \quad  L_{2}^{B}(t)=\tilde{M}e^{\tilde{\ell} t}+\tilde{K}e^{\tilde{\varepsilon} t}
\end{displaymath}
and the inequalities (\ref{identidad2}) are verified with 
\begin{equation}
\label{cotas-beta}
\beta^{A}(t)=\frac{L_{2}^{A}(t)}{L_{1}^{A}}\leq Le^{\theta t}   \quad \textnormal{and} \quad \beta^{B}(t)=\frac{L_{2}^{B}(t)}{L_{1}^{B}}\leq Le^{\theta t}, 
\end{equation}
where 
\begin{equation}
\label{cota-L}
L=\max\left\{\frac{M+K}{L_{1}^{A}},\frac{\tilde{M}+\tilde{K}}{L_{1}^{B}}\right\} \quad \textnormal{and} \quad \theta=\max\{\ell,\tilde{\ell},\varepsilon,\tilde{\varepsilon}\}.
\end{equation}

These constants $L$ and $\theta$ are useful to state the following result:
\begin{lemma}
\label{LTNU}
For any $t,\tau,s\geq 0$, the evolution operators $X$ and $Y$ verify
\begin{displaymath}
||X(t,\tau)Z(\tau)C(\tau)V(\tau)Y(\tau,s)||\leq Le^{\theta s}||C||_{\tau,\infty}
||X(t,\tau)Z(\tau)||_{\tau,t}||V(\tau)Y(\tau,s)||_{s,\tau},
\end{displaymath}
where $Z(\tau)$ is either $P^{A}(\tau)$ or $I_{n}-P^{A}(\tau)$ and $V(\tau)$ is either $P^{B}(\tau)$ or $I_{m}-P^{B}(\tau)$.
\end{lemma}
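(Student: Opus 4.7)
The plan is to decompose the product $U := X(t,\tau)Z(\tau)C(\tau)V(\tau)Y(\tau,s)$ via the two parametrized norm families supplied by the Uniformization Lemma applied separately to each diagonal subsystem: the $A$--subsystem gives $\{|\cdot|_\sigma^A\}$ on $\mathbb{R}^n$ and the $B$--subsystem gives $\{|\cdot|_\sigma^B\}$ on $\mathbb{R}^m$. Since $U$ maps $(\mathbb{R}^m,|\cdot|_s^B)$ into $(\mathbb{R}^n,|\cdot|_t^A)$, the strategy is first to estimate the parametrized operator norm $\|U\|_{s,t}$ by chaining the three pieces, and then to convert back to the standard operator norm $\|U\|$ appearing on the left of the lemma.

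For the chaining step, I would fix $\xi\in\mathbb{R}^m$ and apply the factorization estimates (\ref{UE1})--(\ref{UE3}) along the natural decomposition. Estimate (\ref{UE1}) applied to the outer factor gives
\[
|U\xi|_t^A \leq \|X(t,\tau)Z(\tau)\|_{\tau,t}\,|C(\tau)V(\tau)Y(\tau,s)\xi|_\tau^A;
\]
estimate (\ref{UE2}) applied to the middle factor $C(\tau)$, whose norm $\|C(\tau)\|_{\tau,\tau}$ is by definition computed with source $(\mathbb{R}^m,|\cdot|_\tau^B)$ and target $(\mathbb{R}^n,|\cdot|_\tau^A)$, bounds this by $\|C(\tau)\|_{\tau,\tau}\,|V(\tau)Y(\tau,s)\xi|_\tau^B$; and estimate (\ref{UE3}) applied in the $B$--family yields $|V(\tau)Y(\tau,s)\xi|_\tau^B \leq \|V(\tau)Y(\tau,s)\|_{s,\tau}\,|\xi|_s^B$. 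Taking the supremum over $\xi\neq 0$ produces the analogous bound for $\|U\|_{s,t}$, and using $\|C(\tau)\|_{\tau,\tau}\leq \|C\|_{\tau,\infty}$ from the definition (\ref{bound-C}) completes this step.

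For the conversion step, I would invoke the left half of (\ref{identidad3}) in the form $\|U\|\leq \beta^B(s)\,\|U\|_{s,t}$, where the source distortion is the $B$--family one because $U$ acts on $\mathbb{R}^m$. The estimate $\beta^B(s)\leq L e^{\theta s}$ from (\ref{cotas-beta})--(\ref{cota-L}) then delivers the factor $Le^{\theta s}$ in the statement. The only real obstacle is bookkeeping: one must verify at each step that the parametrized norms belong to the correct source/target family, and that the universal constants $L$ and $\theta$ from (\ref{cota-L}) simultaneously dominate both subsystem distortions $\beta^A$ and $\beta^B$. Once this is organized, the inequality follows immediately from the chain of three factorizations and one change-of-norm estimate.
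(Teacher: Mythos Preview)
Your proposal is correct and follows essentially the same route as the paper: both arguments chain the three factors via (\ref{UE1})--(\ref{UE3}) in the parametrized norms and then pass back to the standard norm using (\ref{ineg1-BV}) and the bound (\ref{cotas-beta})--(\ref{cota-L}). The only cosmetic difference is that the paper performs the change of norm in two separate steps (dividing by $L_1^A$ at the target side before chaining, and bounding $|\xi|_s^B\leq L_2^B(s)|\xi|$ at the source side after chaining), whereas you package both into a single application of (\ref{identidad3}); note in particular that the relevant conversion factor is the mixed ratio $L_2^B(s)/L_1^A$ rather than $\beta^B(s)=L_2^B(s)/L_1^B$, but either is dominated by $Le^{\theta s}$ once $L$ is taken as in (\ref{cota-L}).
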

\begin{proof}
Let $\xi\in \mathbb{R}^{m}\setminus\{0\}$. By using (\ref{ineg1-BV}) followed by (\ref{UE1}) and recalling the dimensions of $C(\cdot)$, we have that
\begin{displaymath}
\begin{array}{rcl}
|X(t,\tau)Z(\tau)C(\tau)V(\tau)Y(\tau,s)\xi|&\leq& \displaystyle
\frac{1}{L_1^{A}}|X(t,\tau)Z(\tau)C(\tau)V(\tau)Y(\tau,s)\xi|_{t}\\\\
&\leq &\displaystyle \frac{1}{L_1^{A}}||X(t,\tau)Z(\tau)||_{\tau,t}|C(\tau)V(\tau)Y(\tau,s)\xi|_{\tau}.
\end{array}
\end{displaymath}

By using (\ref{UE2}) followed by (\ref{UE3}), (\ref{bound-C}), (\ref{ineg1-BV}) combined with (\ref{cotas-beta})--(\ref{cota-L}), it follows that
\begin{displaymath}
\begin{array}{rcl}
|X(t,\tau)Z(\tau)C(\tau)V(\tau)Y(\tau,s)\xi| &\leq &\displaystyle \frac{1}{L_1^{A}}||X(t,\tau)Z(\tau)||_{\tau,t}||C(\tau)||_{\tau,\tau}|V(\tau)Y(\tau,s)\xi|_{\tau} \\\\
&\leq &\displaystyle \frac{1}{L_1^{A}}||X(t,\tau)Z(\tau)||_{\tau,t}||C(\tau)||_{\tau,\tau}||V(\tau)Y(\tau,s)||_{s,\tau}|\xi|_{s}\\\\
&\leq& Le^{\theta s}||C||_{\tau,\infty}||X(t,\tau)Z(\tau)||_{\tau,t}\,||V(\tau)Y(\tau,s)||_{s,\tau}|\xi|
\end{array}
\end{displaymath}
and the Lemma follows.
\end{proof}

We will follow the lines of the work carried out by F. Batelli and K.J. Palmer in \cite{Batelli}, which proved that evolution operator of the triangular system (\ref{STB}) is given by
\begin{displaymath}
T(t,s)=\left[\begin{array}{cc}
X(t,s) & W(t,s) \\
0 & Y(t,s)
\end{array}\right]
\end{displaymath}
where $W$ is a $n\times m$ matrix defined by
\begin{equation}
\label{W}
W(t,s):=\int_{s}^{t}X(t,\tau)C(\tau)Y(\tau,s)\,d\tau.
\end{equation}

In addition, as in \cite{Batelli}, let us consider: 
\begin{displaymath}
P(t)=T(t,0)\left[\begin{array}{cc}
P^{A}(0) & \mathcal{L}P^{B}(0) \\
0 & P^{B}(0)
\end{array}\right]T(0,t)=\left[\begin{array}{cc}
P^{A}(t) & R(t) \\
0 & P^{B}(t)
\end{array}\right],
\end{displaymath}
where $\mathcal{L}\colon \mathcal{R}P^{B}(0)\to (\mathcal{R}P^{A}(0))^{\perp}$ is the \textit{linking operator} defined by 
\begin{displaymath}
\mathcal{L}\eta=-\int_{0}^{\infty}[I_{m}-P^{A}(0)]X(0,\tau)C(\tau)Y(\tau,0)\eta\,d\tau,
\end{displaymath}
which plays an important role in the proof of Proposition \ref{vietnam-0}. Moreover, $R(t)$ satisfies the matrix differential equation
$$
\dot{R}=A(t)R-RB(t)+C(t)P^{B}(t)-P^{A}(t)C(t) \quad \textnormal{with  $R(0)=\mathcal{L}P^{B}(0)$},
$$
whose solution is defined by
\begin{equation}
\label{def-R}
\begin{array}{rcl}
R(t) &=& X(t,0)R(0)Y(0,t)\\\\
&&\displaystyle +\int_{0}^{t}X(t,\tau)[C(\tau)P^{B}(\tau)-P^{A}(\tau)C(\tau)]Y(\tau,t)\,d\tau,
\end{array}
\end{equation}
or alternatively as follows
\begin{displaymath}
\begin{array}{rcl}
R(t)
&=&
\displaystyle \underbrace{-\int_{t}^{+\infty}X(t,\tau)[I_{n}-P^{A}(\tau)]C(\tau)P^{B}(\tau)Y(\tau,t)\,d\tau}_{=R_{1}(t)}\\\\
&&\displaystyle \underbrace{-\int_{0}^{t}X(t,\tau)P^{A}(\tau)C(\tau)[I_{m}-P^{B}(\tau)]Y(\tau,t)\,d\tau}_{=R_{2}(t)}.
\end{array}
\end{displaymath}

The Lemma \ref{LTNU} will be helpful to provide an estimation for $R(t)$:
\begin{lemma}
For any $t\geq 0$, the matrix $R(t)$ verifies
\begin{equation}
\label{Bound-R}
||R(t)||\leq \frac{2\kappa\widetilde{\kappa}}{\widetilde{\alpha}+\alpha}Le^{\theta t}||C||_{\tau,\infty}.
\end{equation}
\end{lemma}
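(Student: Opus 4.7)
The plan is to bound $R(t)$ via its decomposition $R(t)=R_1(t)+R_2(t)$ already displayed in the text, estimating each integrand with Lemma~\ref{LTNU} and then applying the uniformized dichotomy bounds for $X$ and $Y$ coming from the Uniformization Lemma. The triangle inequality for the integral norm then reduces everything to elementary exponential integrals.

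For $R_1(t)$ the integration range is $\tau\in[t,+\infty)$ and the integrand is $X(t,\tau)[I_n-P^A(\tau)]\,C(\tau)\,P^B(\tau)\,Y(\tau,t)$. I apply Lemma~\ref{LTNU} with $s:=t$, $Z(\tau):=I_n-P^A(\tau)$ and $V(\tau):=P^B(\tau)$ to obtain, for each $\tau\geq t$,
\[
\bigl\|X(t,\tau)[I_n-P^A(\tau)]C(\tau)P^B(\tau)Y(\tau,t)\bigr\|\leq Le^{\theta t}\|C\|_{\tau,\infty}\,\bigl\|X(t,\tau)[I_n-P^A(\tau)]\bigr\|_{\tau,t}\,\bigl\|P^B(\tau)Y(\tau,t)\bigr\|_{t,\tau}.
\]
The first parametrized norm is $\leq\kappa e^{-\alpha(\tau-t)}$ directly from the uniformized dichotomy of $\dot x=A(t)x$. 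For the second, I use invariance to rewrite $P^B(\tau)Y(\tau,t)=Y(\tau,t)P^B(t)$ and then the uniformized dichotomy of $\dot y=B(t)y$ yields $\|Y(\tau,t)P^B(t)\|_{t,\tau}\leq\widetilde{\kappa}e^{-\widetilde{\alpha}(\tau-t)}$. Combining and integrating $e^{-(\alpha+\widetilde{\alpha})(\tau-t)}$ over $[t,+\infty)$ produces the factor $1/(\alpha+\widetilde{\alpha})$, so $\|R_1(t)\|\leq \frac{\kappa\widetilde{\kappa}}{\alpha+\widetilde{\alpha}}Le^{\theta t}\|C\|_{\tau,\infty}$.

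The argument for $R_2(t)$ is symmetric but uses the other halves of the two dichotomies. Here $\tau\in[0,t]$ and I apply Lemma~\ref{LTNU} with $s:=t$, $Z(\tau):=P^A(\tau)$ and $V(\tau):=I_m-P^B(\tau)$. The bound $\|X(t,\tau)P^A(\tau)\|_{\tau,t}\leq\kappa e^{-\alpha(t-\tau)}$ follows immediately, and invariance gives $[I_m-P^B(\tau)]Y(\tau,t)=Y(\tau,t)[I_m-P^B(t)]$ so that $\|Y(\tau,t)[I_m-P^B(t)]\|_{t,\tau}\leq\widetilde{\kappa}e^{-\widetilde{\alpha}(t-\tau)}$. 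Integrating $e^{-(\alpha+\widetilde{\alpha})(t-\tau)}$ over $[0,t]$ yields the same prefactor $1/(\alpha+\widetilde{\alpha})$, giving $\|R_2(t)\|\leq \frac{\kappa\widetilde{\kappa}}{\alpha+\widetilde{\alpha}}Le^{\theta t}\|C\|_{\tau,\infty}$.

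Summing the two estimates by the triangle inequality produces the desired bound (\ref{Bound-R}). There is no real obstacle: the one point to be careful about is that Lemma~\ref{LTNU} introduces the factor $e^{\theta s}$, so I must apply it with the smaller of the two times appearing in $Y(\tau,\cdot)$ as the parameter $s$; in both pieces this choice is $s=t$, which is exactly what produces the clean $e^{\theta t}$ growth on the right-hand side of (\ref{Bound-R}).
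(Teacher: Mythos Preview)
Your proof is correct and follows essentially the same route as the paper: split $R(t)=R_1(t)+R_2(t)$, apply Lemma~\ref{LTNU} pointwise to each integrand with $s=t$, use the uniformized dichotomy bounds for $X$ and $Y$, and compute the resulting exponential integrals. The paper argues with a test vector $\xi$ rather than operator norms and does not explicitly invoke the invariance relation $P^{B}(\tau)Y(\tau,t)=Y(\tau,t)P^{B}(t)$, but these are purely cosmetic differences; your closing remark about choosing ``the smaller of the two times'' is slightly imprecise (in Lemma~\ref{LTNU} the exponent is dictated by the second argument of $Y$, which here is $t$ regardless of order), but your actual application is right.
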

\begin{proof}
Let $\xi \in \mathbb{R}^{m}\setminus \{0\}$.
By using Lemma \ref{LTNU} we can deduce
\begin{displaymath}
\begin{array}{rcl}
|R_{1}(t)\xi|&\leq & \displaystyle Le^{\theta t}||C||_{\tau,\infty}\int_{t}^{\infty}||X(t,\tau)[I_{n}-P^{A}(\tau)]||_{\tau,t} \left \|P^{B}(\tau)Y(\tau,t)\right \|_{t,\tau}|\xi|\,d\tau.
\end{array}
\end{displaymath}

Now, we apply the Uniformization Lemma to
(\ref{DD}) and (\ref{DDD}) respectively, and
we can deduce that
\begin{displaymath}
\begin{array}{rcl}
|R_{1}(t)\xi|&\leq & \displaystyle
Le^{\theta t}||C||_{\tau,\infty}\int_{t}^{\infty}\left \|X(t,\tau)[I_{n}-P^{A}(\tau)]\right \|_{\tau,t}  ||P^{B}(\tau)Y(\tau,t)||_{t,\tau}|\xi|\,d\tau\\\\
&\leq& \displaystyle \kappa\,\widetilde{\kappa}Le^{\theta t}||C||_{\tau,\infty}|\xi|\,e^{(\alpha+\widetilde{\alpha})t}\int_{t}^{\infty}e^{-(\alpha+\widetilde{\alpha})\tau}\,d\tau\\\\
&\leq& \displaystyle \frac{\kappa\, \widetilde{\kappa}}{\alpha+\widetilde{\alpha}}Le^{\theta t}||C||_{\tau,\infty}|\xi|.
\end{array}
\end{displaymath}

Similarly, for the second term, by using Lemma \ref{LTNU} followed by the Uniformization Lemma, we can deduce that
$$
\begin{array}{rcl}
|R_{2}(t)\xi|&\leq& \displaystyle
Le^{\theta t}||C||_{\tau,\infty}\int_{0}^{t}
||X(t,\tau)P^{A}(\tau)||_{\tau,t}\,||[I_{m}-P^{B}(\tau)]Y(\tau,t)||_{t,\tau}|\xi|\,d\tau \\\\
&\leq& \displaystyle \kappa\,\widetilde{\kappa}Le^{\theta t}||C||_{\tau,\infty}|\xi|\,e^{-(\alpha+\widetilde{\alpha})t}\int_{0}^{t}e^{(\alpha+\widetilde{\alpha})\tau}\,d\tau\\\\
&\leq &\displaystyle \frac{\kappa \widetilde{\kappa}}{\alpha+\widetilde{\alpha}}Le^{\theta t}||C||_{\tau,\infty}|\xi|,
\end{array}
$$
and the inequality (\ref{Bound-R}) follows.
\end{proof}

We will verify that $P(\cdot)$ is an invariant projector. In fact, the property
$P^{2}(t)=P(t)$ for any $t\geq 0$ is a consequence of its own definition, while the next result proves its invariance. This last property has not been proved in \cite{Batelli} and, in spite that can be deduced easily, we will prove it. 

\begin{lemma}
\label{T-INV}
The projector $P(\cdot)$ is invariant, namely, it verifies the property
\begin{displaymath}
T(t,s)P(s)=P(t)T(t,s)  \quad \textnormal{for any $t,s\geq 0$}.
\end{displaymath}
\end{lemma}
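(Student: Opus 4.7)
The plan is to simply unfold the definition of $P(t)$ and invoke the cocycle (evolution) property of $T(t,s)$. Writing
\[
P(0) := \begin{pmatrix} P^{A}(0) & \mathcal{L}P^{B}(0) \\ 0 & P^{B}(0) \end{pmatrix},
\]
the defining formula reads $P(t) = T(t,0)\,P(0)\,T(0,t)$ for every $t \geq 0$. Hence
\[
P(t)\,T(t,s) \;=\; T(t,0)\,P(0)\,T(0,t)\,T(t,s),
\qquad
T(t,s)\,P(s) \;=\; T(t,s)\,T(s,0)\,P(0)\,T(0,s).
\]
The cocycle identity $T(a,b)\,T(b,c) = T(a,c)$, valid for the evolution operator of any linear system (and in particular for the triangular system \eqref{STB}), collapses both right-hand sides to the common expression $T(t,0)\,P(0)\,T(0,s)$, which proves the invariance.

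The only issue is ensuring that the cocycle identity applies on all of $\mathbb{R}_{0}^{+}$ without restriction on the ordering of $t$ and $s$; this is not a real obstacle because $T(\cdot,\cdot)$ is constructed from a fundamental matrix as $T(t,s) = T(t)T^{-1}(s)$, so $T(t,s)T(s,r) = T(t)T^{-1}(s)T(s)T^{-1}(r) = T(t,r)$ for arbitrary $t,s,r \geq 0$. Consequently, no splitting into the cases $t \geq s$ or $s \geq t$ is required and the lemma follows in one line. The idempotence $P(t)^{2} = P(t)$ (already noted just before the statement) combined with the above gives that $P(\cdot)$ is indeed a family of projectors invariant under the flow, completing the verification.
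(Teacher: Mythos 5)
Your proof is correct, and it takes a genuinely different (and shorter) route than the paper. You work exclusively with the conjugation form $P(t)=T(t,0)P(0)T(0,t)$ and the cocycle identity $T(a,b)T(b,c)=T(a,c)$, which immediately collapses both $P(t)T(t,s)$ and $T(t,s)P(s)$ to $T(t,0)P(0)T(0,s)$; this argument is valid for all $t,s\geq 0$ without case splitting, exactly as you observe, and it would in fact work for \emph{any} constant matrix in place of $P(0)$. The paper instead works with the explicit block representation $P(t)=\left(\begin{smallmatrix} P^{A}(t) & R(t)\\ 0 & P^{B}(t)\end{smallmatrix}\right)$, reduces the invariance to the identity $R(t)Y(t,s)+P^{A}(t)W(t,s)=W(t,s)P^{B}(s)+X(t,s)R(s)$, and verifies it by direct manipulation of the integral formulas for $W(t,s)$ and $R(t)$. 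What the paper's longer computation buys is a consistency check between the two representations of $P(t)$ (the conjugated constant projector versus the block form with $R$ given by its integral formula), identities which are then reused in the subsequent dichotomy estimates; your argument establishes the invariance claim itself more cleanly but does not touch that block-level bookkeeping. As stated, the lemma only asserts invariance, so your one-line proof fully suffices.
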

\begin{proof}
Notice that
\begin{displaymath}
T(t,s)P(s)=\left[\begin{array}{cc}
X(t,s)P^{A}(s) & W(t,s)P^{B}(s)+X(t,s)R(s) \\
0 & Y(t,s)P^{B}(s)
\end{array}\right].
\end{displaymath}
As $P^{A}(\cdot)$ and $P^{B}(\cdot)$ are invariant projectors, we can see that the Lemma follows if and only if
\begin{equation*}
R(t)Y(t,s)+P^{A}(t)W(t,s)=W(t,s)P^{B}(s)+X(t,s)R(s) \quad \textnormal{for any $t,s\geq 0$.}
\end{equation*}

By defining $R_{t,s}(0):=X(t,0)R(0)Y(0,s)$, using (\ref{W}) and (\ref{def-R})
and considering $t\geq s$, we can easily deduce that
\begin{displaymath}
\begin{array}{rcl}
R(t)Y(t,s)+P^{A}(t)W(t,s)&=&\displaystyle R_{t,s}(0)+\int_{0}^{t}X(t,\tau)[C(\tau)P^{B}(\tau)-P^{A}(\tau)C(\tau)]Y(\tau,s)\,d\tau \\\\
&&\displaystyle+
\int_{s}^{t}X(t,\tau)P^{A}(\tau)C(\tau)Y(\tau,s)\,d\tau \\\\
&=& \displaystyle R_{t,s}(0)+\int_{0}^{s}X(t,\tau)[C(\tau)P^{B}(\tau)-P^{A}(\tau)C(\tau)]Y(\tau,s)\,d\tau\\\\
&&\displaystyle +\int_{s}^{t}X(t,\tau)C(\tau)P^{B}(\tau)Y(\tau,s)\,d\tau \\\\
&=& X(t,s)R(s)+W(t,s)P^{B}(s).
\end{array}
\end{displaymath}

A similar identity can be deduced considering 
$t<s$ and the Lemma follows.
\end{proof}

Gathering the above results, it can be proved that the triangular system (\ref{STB}) has a ($\bar{K}e^{\bar{\varepsilon}s},\bar{\alpha}$)--nonuniform
exponential dichotomy on $\mathbb{R}_{0}^{+}$ with the above defined invariant projector $P(t)$.

\begin{lemma}
\label{Lema-W}
There exist a constant $K_{3}\geq1$, $\alpha_{3}>0$ and $\varepsilon_{3}\geq0$, where $\varepsilon_3<\alpha_3$ such that
\begin{equation}
\label{DENUa1}
||W(t,s)P^{B}(s)+X(t,s)R(s)||\leq K_{3}e^{-\alpha_{3}(t-s)+\varepsilon_3 s} \quad \textnormal{for 
$t\geq s\geq 0$}.
\end{equation}
\end{lemma}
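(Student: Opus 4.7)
The plan is to follow the Batelli--Palmer template, rewriting $W(t,s)P^{B}(s)+X(t,s)R(s)$ as a sum of three ``pure'' integrals (stable--stable, unstable--stable, stable--unstable), and then to estimate each one by combining Lemma \ref{LTNU} with the uniformized dichotomy bounds for $X$ and $Y$. First I would use the evolution property $X(t,s)X(s,\tau)=X(t,\tau)$ together with the invariance $X(t,s)[I-P^{A}(s)]=[I-P^{A}(t)]X(t,s)$ and $X(t,s)P^{A}(s)=P^{A}(t)X(t,s)$ to push the factor $X(t,s)$ inside the integrals defining $R_{1}(s)$ and $R_{2}(s)$. After this rearrangement, the two pieces coming from $R(s)$ and the piece $W(t,s)P^{B}(s)=\int_{s}^{t}X(t,\tau)C(\tau)P^{B}(\tau)Y(\tau,s)\,d\tau$ (where I have used $Y(\tau,s)P^{B}(s)=P^{B}(\tau)Y(\tau,s)$) live on intervals $[s,\infty)$, $[0,s]$ and $[s,t]$ respectively.

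Next I would split $C(\tau)=P^{A}(\tau)C(\tau)+[I-P^{A}(\tau)]C(\tau)$ inside the $W(t,s)P^{B}(s)$ integral. The $[I-P^{A}(\tau)]$ part combines with the relocated $X(t,s)R_{1}(s)$ to turn the two integrals over $[s,t]$ and $[s,\infty)$ into a single integral over $[t,\infty)$. This produces the canonical decomposition
\begin{align*}
W(t,s)P^{B}(s)+X(t,s)R(s)
&=\int_{s}^{t}X(t,\tau)P^{A}(\tau)C(\tau)P^{B}(\tau)Y(\tau,s)\,d\tau\\
&\quad-\int_{t}^{\infty}X(t,\tau)[I_{n}-P^{A}(\tau)]C(\tau)P^{B}(\tau)Y(\tau,s)\,d\tau\\
&\quad-\int_{0}^{s}X(t,\tau)P^{A}(\tau)C(\tau)[I_{m}-P^{B}(\tau)]Y(\tau,s)\,d\tau,
\end{align*}
in which each integrand matches exactly the hypotheses of Lemma \ref{LTNU}.

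For each of the three integrands I would invoke Lemma \ref{LTNU} with the pertinent choice of $Z(\tau)$ and $V(\tau)$, and then use the uniformized dichotomy estimates from the Uniformization Lemma to control $\|X(t,\tau)Z(\tau)\|_{\tau,t}$ by $\kappa e^{-\alpha|t-\tau|}$ and $\|V(\tau)Y(\tau,s)\|_{s,\tau}$ by $\tilde\kappa e^{-\tilde\alpha|\tau-s|}$ (in the corresponding half-line). This yields three pointwise bounds of the form $L\kappa\tilde\kappa\,\|C\|_{\tau,\infty}\,e^{\theta s}\,e^{-\alpha|t-\tau|-\tilde\alpha|\tau-s|}$, which I would then integrate in $\tau$. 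Routine calculation shows that the second and third integrals produce clean bounds $\frac{1}{\alpha+\tilde\alpha}e^{-\bar\alpha(t-s)}$ with $\bar\alpha=\min\{\alpha,\tilde\alpha\}$, while the first integral produces either $\frac{1}{|\alpha-\tilde\alpha|}e^{-\bar\alpha(t-s)}$ or, in the resonant case $\alpha=\tilde\alpha$, the factor $(t-s)e^{-\alpha(t-s)}$. Summing the three estimates gives the desired inequality with $\varepsilon_{3}=\theta$ and any $\alpha_{3}<\bar\alpha$, and an absolute constant $K_{3}$ depending on $\kappa,\tilde\kappa,L,\alpha,\tilde\alpha,\|C\|_{\tau,\infty}$.

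The main obstacle I anticipate is the resonant case $\alpha=\tilde\alpha$, where the polynomial prefactor $(t-s)$ has to be absorbed into a slightly weaker exponential $e^{-\alpha_{3}(t-s)}$ at the cost of worsening the constant $K_{3}$; this is the reason we can only assert $\alpha_{3}<\bar\alpha$ rather than $\alpha_{3}=\bar\alpha$. A secondary bookkeeping issue is to verify that the resulting $\varepsilon_{3}=\theta$ stays strictly below $\alpha_{3}$, which is what makes the bound qualify as part of a genuine $(\bar K e^{\bar\varepsilon s},\bar\alpha)$--nonuniform exponential dichotomy rather than a mere exponential estimate.
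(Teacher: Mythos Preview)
Your plan is correct and follows essentially the same route as the paper: the same three-term decomposition into $D_1$, $D_2$, $D_3$ on $[s,t]$, $[t,\infty)$, $[0,s]$, the same application of Lemma \ref{LTNU} together with the uniformized dichotomy bounds, and the same treatment of the resonant case $\alpha=\tilde\alpha$ by trading the factor $(t-s)$ for a slightly weaker exponent $\alpha-\gamma$. The paper even records the same constants you describe, setting $\varepsilon_{3}=\theta$ and $\alpha_{3}=\min\{\alpha,\tilde\alpha\}$ (respectively $\alpha-\gamma$ in the resonant case), and your remark about needing $\theta<\alpha_{3}$ matches the paper's side condition $\theta<\alpha-\gamma$.
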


\begin{proof}
In order to deduce this estimation, we will write
\begin{displaymath}
\begin{array}{rcl}
W(t,s)P^{B}(s)+X(t,s)R(s) &=& \displaystyle \underbrace{\int_{s}^{t}X(t,\tau)P^{A}(\tau)C(\tau)P^{B}(\tau)Y(\tau,s)d\tau}_{:=D_{1}}\\
&-&\displaystyle\underbrace{\int_{t}^{+\infty}X(t,\tau)Q^{A}(\tau)C(\tau)P^{B}(\tau)Y(\tau,s)d\tau}_{:=D_{2}}\\
&-&\displaystyle\underbrace{\int_{0}^{s}X(t,\tau)P^{A}(\tau)C(\tau)Q^{B}(\tau)Y(\tau,s)d\tau}_{:=D_{3}},
\end{array}
\end{displaymath}
where $I_n-P^{A}(\tau)=Q^{A}(\tau)$ and $I_{m}-P^{B}(\tau)=Q^{B}(\tau)$.

By using again Lemma \ref{LTNU} followed by the Uniformization Lemma and recalling that $s\leq t$, we have that:
\begin{equation}
\label{D1}
\begin{array}{rcl}
||D_{1}||
&\leq& \displaystyle Le^{\theta s}||C||_{\tau,\infty}\int_{s}^{t}||X(t,\tau)P^{A}(\tau)||_{\tau,t}\,||P^{B}(\tau)Y(\tau,s)||_{s,\tau}\,d\tau\\\\
&\leq & \displaystyle Le^{\theta s}||C||_{\tau,\infty}\kappa\widetilde{\kappa}\int_{s}^{t}e^{-\alpha(t-\tau)}e^{-\widetilde{\alpha}(\tau-s)}\,d\tau,
\end{array}
\end{equation}
\begin{equation}
\label{D2}
\begin{array}{rcl}
||D_{2}||
&\leq& \displaystyle
Le^{\theta s}||C||_{\tau, \infty}\int_{t}^{\infty}||X(t,\tau)Q^{A}(\tau)||_{t,\tau}\,||P^{B}(\tau)Y(\tau,s)||_{s,\tau}\,d\tau\\\\
&\leq & \displaystyle
Le^{\theta s}||C||_{\tau,\infty}\kappa\widetilde{\kappa}\int_{t}^{\infty}e^{-\alpha(\tau-t)}e^{-\widetilde{\alpha}(\tau-s)}\,d\tau,
\end{array}
\end{equation}
\begin{equation}
\label{D3}
\begin{array}{rcl}
||D_{3}||&\leq& \displaystyle
Le^{\theta s}||C||_{\tau,\infty}\int_{0}^{s}||X(t,\tau)P^{A}(\tau)||_{\tau,t}\,||Q^{B}(\tau)Y(\tau,s)||_{s,\tau}\,ds\\\\
&\leq & \displaystyle Le^{\theta s}||C||_{\tau,\infty}\kappa\widetilde{\kappa}\int_{0}^{s}e^{-\alpha(t-\tau)}e^{-\widetilde{\alpha}(s-\tau)}\,d\tau.
\end{array}
\end{equation}

As a consequence of the estimates (\ref{D1}), (\ref{D2}), (\ref{D3})  and defining $\kappa_{I}=\max\left \{ \kappa,\widetilde{\kappa}\right \}$, we have:
$$
\begin{array}{rcl}
    &&||W(t,s)P^{B}(s)+X(t,s)R(s)|| \leq \displaystyle Le^{\theta s}||C||_{\tau,\infty}\,\kappa_{I}^{2}\int_{s}^{t}e^{-\alpha(t-\tau)}e^{-\widetilde{\alpha}(\tau-s)}d\tau \\\\
     &+& \displaystyle Le^{\theta s}||C||_{\tau,\infty}\,\kappa_{I}^{2}\int_{t}^{+\infty}e^{-\alpha(\tau-t)}e^{-\widetilde{\alpha}(\tau-s)}d\tau
     + \displaystyle Le^{\theta s}||C||_{\tau,\infty}\,\kappa_{I}^{2}\int_{0}^{s}e^{-\alpha(t-\tau)}e^{-\widetilde{\alpha}(s-\tau)}d\tau\\\\

     &\leq & \displaystyle Le^{\theta s}||C||_{\tau,\infty}\,\kappa_{I}^{2}\cdot\left \{\left (\frac{e^{-\widetilde{\alpha}(t-s)}-e^{-\alpha(t-s)}}{\alpha-\widetilde{\alpha}}\right )
     +\displaystyle \left ( \frac{e^{-\widetilde{\alpha}(t-s)}}{\alpha+\widetilde{\alpha}}\right )
     +\displaystyle  \left ( \frac{e^{-\alpha(t-s)}-e^{-\alpha t-\widetilde{\alpha}s}}{\alpha +\widetilde{\alpha}}\right )\right \}\\\\
     &\leq&\displaystyle Le^{\theta s}||C||_{\tau,\infty}\,\kappa_{I}^{2}\cdot\left [ \frac{1}{|\alpha-\widetilde{\alpha}|}+\frac{1}{\alpha+\widetilde{\alpha}}+\frac{1}{\alpha+\widetilde{\alpha}}\right ]e^{-\alpha_1(t-s)} \\\\
     &\leq &\displaystyle K_1e^{-\alpha_1(t-s)+\theta s},
\end{array}
$$
and if $\alpha \neq \tilde{\alpha}$, then (\ref{DENUa1}) is verified with $\alpha_1=\min\{\alpha,\widetilde{\alpha}\}>\theta$ and 
$$K_1=\max\left \{1,\;\displaystyle L ||C||_{\tau,\infty}\,\kappa_{I}^{2}\cdot\left [ \frac{1}{|\alpha-\widetilde{\alpha}|}+\frac{1}{\alpha+\widetilde{\alpha}}+\frac{1}{\alpha+\widetilde{\alpha}}\right ]\right \}.$$

Note that, similarly as done in \cite{Batelli}, if $\alpha=\widetilde{\alpha}$, we can see that only the first term in the above brackets must be replaced by a new estimation of (\ref{D1}):

$$
\begin{array}{rcl}
||D_{1}||
&\leq & \displaystyle Le^{\theta s}||C||_{\tau,\infty}\kappa\widetilde{\kappa}\int_{s}^{t}e^{-\alpha(t-\tau)}e^{-\alpha(\tau-s)}\,d\tau \\\\
&\leq & \displaystyle Le^{\theta s}||C||_{\tau,\infty}\kappa^{2}_{I}\int_{s}^{t}e^{-\alpha(t-s)}\,d\tau\\\\
&\leq& \displaystyle Le^{\theta s}||C||_{\tau,\infty}\kappa^{2}_{I}(t-s)e^{-\alpha(t-s)},
\end{array}
$$
and since the estimation $(t-s)e^{-\gamma(t-s)}\leq\frac{1}{\gamma e}$, for a positive $\gamma$, if we have that $\gamma<\alpha$ and $\theta<\alpha-\gamma$, then the previous inequality becomes
$$
\begin{array}{rcl}
||D_{1}||
&\leq& \displaystyle Le^{\theta s}||C||_{\tau,\infty}\kappa^{2}_{I}(t-s)e^{-\alpha(t-s)}\leq Le^{\theta s}||C||_{\tau,\infty}\kappa^{2}_{I}\frac{1}{\gamma e}e^{-(\alpha-\gamma)(t-s)},\\
\end{array}
$$
thus we obtain that
$$
\begin{array}{rcl}
 &&||W(t,s)P^{B}(s)+X(t,s)R(s)||\\\\ 
 &\leq& \displaystyle
 Le^{\theta s}||C||_{\tau,\infty}\kappa^{2}_{I}\frac{1}{\gamma e}e^{-(\alpha-\gamma)(t-s)}+Le^{\theta s}||C||_{\tau,\infty}\,\kappa_{I}^{2}\frac{1}{\alpha}e^{-\alpha(t-s)}\\\\
  &\leq& \displaystyle
 Le^{\theta s}||C||_{\tau,\infty}\kappa^{2}_{I}\frac{1}{\gamma }e^{-(\alpha-\gamma)(t-s)}+Le^{\theta s}||C||_{\tau,\infty}\,\kappa_{I}^{2}\frac{1}{\gamma}e^{-(\alpha-\gamma)(t-s)}\\\\
 &\leq&K_2e^{-\alpha_{2}(t-s)+\theta s},
\end{array}
$$
where $K_2=\max\left \{ 1,\; 2L||C||_{\tau,\infty}\kappa^{2}_{I}\frac{1}{\gamma }\right \}$ and $\alpha_2=\alpha-\gamma>\theta$.

Furthermore, if we define $K_3:=\max\{K_1,K_2\}$, $\alpha _3:=\min\{\alpha_{1},\alpha_{2}\}$ and $\varepsilon_{3}:=\theta$, we can conclude the estimate (\ref{DENUa1}).
\end{proof}

\begin{lemma}
\label{DEP}
The evolution operator of (\ref{STB}) and the projector $P(t)$ previously defined verify:
$$
\begin{array}{rcl}
\left \|T(t,s)P(s)\right \|&\leq&\bar{K}e^{-\bar{\alpha}(t-s)+\bar{\varepsilon}s} \quad \textnormal{for any $t\geq s\geq 0$},
\end{array}
$$
where $\bar{K}\geq1$, $\bar{\alpha}>0$, $\bar{\varepsilon}\geq 0$, with $\bar{\alpha}>\bar{\varepsilon}$.
\end{lemma}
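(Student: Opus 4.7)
The plan is to exploit the explicit block upper triangular form
\begin{displaymath}
T(t,s)P(s)=\begin{pmatrix}
X(t,s)P^{A}(s) & W(t,s)P^{B}(s)+X(t,s)R(s) \\
0 & Y(t,s)P^{B}(s)
\end{pmatrix}
\end{displaymath}
already recorded in the proof of Lemma \ref{T-INV}, and to reduce the estimation of $\|T(t,s)P(s)\|$ to bounding each of the three nonzero blocks separately. Endowing $\mathbb{R}^{n+m}=\mathbb{R}^{n}\times\mathbb{R}^{m}$ with a norm for which the operator norm of a block matrix is dominated by the sum of the operator norms of its entries (for instance the max-norm on components), I would write
\begin{displaymath}
\|T(t,s)P(s)\|\leq \|X(t,s)P^{A}(s)\|+\|W(t,s)P^{B}(s)+X(t,s)R(s)\|+\|Y(t,s)P^{B}(s)\|.
\end{displaymath}

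For the two diagonal blocks I would simply quote the contraction halves of the dichotomy estimates in (\ref{DD}) and (\ref{DDD}), namely $\|X(t,s)P^{A}(s)\|\leq K e^{-\alpha(t-s)+\varepsilon s}$ and $\|Y(t,s)P^{B}(s)\|\leq \tilde{K} e^{-\tilde{\alpha}(t-s)+\tilde{\varepsilon} s}$ valid for $t\geq s\geq 0$. For the coupled off-diagonal block I would invoke Lemma \ref{Lema-W}, which has already absorbed all the technical work and yields $\|W(t,s)P^{B}(s)+X(t,s)R(s)\|\leq K_{3}e^{-\alpha_{3}(t-s)+\varepsilon_{3} s}$ with $\varepsilon_{3}=\theta$ and $\varepsilon_{3}<\alpha_{3}$. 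After this, I would set $\bar{\alpha}:=\min\{\alpha,\tilde{\alpha},\alpha_{3}\}$, $\bar{\varepsilon}:=\max\{\varepsilon,\tilde{\varepsilon},\varepsilon_{3}\}$, and $\bar{K}:=K+\tilde{K}+K_{3}$; since $t\geq s\geq 0$, replacing each exponential decay rate by the smallest and each nonuniform factor by the largest only enlarges each summand, so the three contributions combine into $\bar{K}e^{-\bar{\alpha}(t-s)+\bar{\varepsilon} s}$.

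The only genuinely delicate point is the strict inequality $\bar{\alpha}>\bar{\varepsilon}$ required by the statement. Since $\bar{\varepsilon}=\varepsilon_{3}=\theta=\max\{\ell,\tilde{\ell},\varepsilon,\tilde{\varepsilon}\}$, this reduces to checking $\alpha>\theta$, $\tilde{\alpha}>\theta$ and $\alpha_{3}>\theta$. The last is built into Lemma \ref{Lema-W}, and inspection of the proof of that lemma shows that the first two are implicitly already used there (they underlie the claim $\alpha_{1}=\min\{\alpha,\tilde{\alpha}\}>\theta$ in the case $\alpha\neq\tilde{\alpha}$, and the choice of the auxiliary $\gamma$ in the case $\alpha=\tilde{\alpha}$). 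Thus $\bar{\alpha}>\bar{\varepsilon}$ comes for free once Lemma \ref{Lema-W} is in hand, no new dichotomy input is needed, and the estimate $\|T(t,s)P(s)\|\leq \bar{K}e^{-\bar{\alpha}(t-s)+\bar{\varepsilon} s}$ follows for all $t\geq s\geq 0$.
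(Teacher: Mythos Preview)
Your proposal is correct and follows essentially the same route as the paper: both arguments exploit the block form of $T(t,s)P(s)$, bound the two diagonal blocks via (\ref{DD}) and (\ref{DDD}), and appeal to Lemma~\ref{Lema-W} for the off-diagonal term, then combine the three exponential estimates by taking the worst decay rate and the worst nonuniform factor. Your treatment of the inequality $\bar{\alpha}>\bar{\varepsilon}$ is in fact more explicit than the paper's (which simply asserts that ``the Lemma follows easily''), and your observation that this inequality is already implicit in the proof of Lemma~\ref{Lema-W} is accurate.
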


\begin{proof}
Let us consider $(\xi_1,\xi_2)\in\R^{n}\times\R^{m}\setminus\{(0,0)\}$, then we have that
$$
\begin{array}{rcl}
\left |T(t,s)P(s)\begin{pmatrix}
\xi_1 \\
\xi_2
\end{pmatrix}\right |&=&\left |\left[\begin{array}{cc}
X(t,s)P^{A}(s) & W(t,s)P^{B}(s)+X(t,s)R(s) \\
0 & Y(t,s)P^{B}(s)
\end{array}\right]\begin{pmatrix}
\xi_1 \\
\xi_2
\end{pmatrix}\right |,\\\\
&=&\left |\left[\begin{array}{cc}
X(t,s)P^{A}(s)\xi_1 + (W(t,s)P^{B}(s)+X(t,s)R(s))\xi_2 \\
 Y(t,s)P^{B}(s)\xi_2
\end{array}\right] \right |\\\\
&\leq&||X(t,s)P^{A}(s) ||\cdot |\xi_1|+\\\\
&&||W(t,s)P^{B}(s)+X(t,s)R(s)||\cdot|\xi_2|+|| Y(t,s)P^{B}(s)||\cdot|\xi_2|
\end{array}
$$
and due to the estimates (\ref{DENUa1}) for the second summand deduced in the above Lemma, the fact that $|\xi_i|\leq |(\xi_1,\xi_2)|$ and the estimations (\ref{DD}) and (\ref{DDD}), the Lemma follows easily.
\end{proof}

The following two lemmas emulate the previous results considering the complementary projectors and $t\leq s$.
Allowing us to end the treatment and study of the dichotomy properties.

\begin{lemma} 
There exist a constant $K_{3}\geq1$, $\alpha_{3}>0$ and $\varepsilon_{3}\geq0$, where $\varepsilon_3<\alpha_3$ such that
\begin{equation}
\label{DENUa2}
||(I_n-P^{A}(t))W(t,s)-R(t)Y(t,s)||\leq K_{3}e^{-\alpha_3(s-t)+\varepsilon_{3}s} \quad \textnormal{for 
$s\geq t\geq 0$}.
\end{equation}
\end{lemma}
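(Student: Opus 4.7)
The plan mirrors the proof of Lemma \ref{Lema-W}, but now exploits the \emph{unstable} direction of both diagonal dichotomies. First, I would use Lemma \ref{T-INV} (the invariance $T(t,s)P(s)=P(t)T(t,s)$) to rewrite
$$(I_n-P^{A}(t))W(t,s)-R(t)Y(t,s) \;=\; W(t,s)(I_m-P^{B}(s))-X(t,s)R(s).$$
This identity is the analog of the algebraic step behind (\ref{DENUa1}) and it is the crucial starting point since it moves the unstable projectors onto the ``right-hand side'' where the integrals over $[s,\infty)$ and $[0,s]$ that define $R(s)$ can be combined with the integral that defines $W(t,s)$.

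Next, using the integral representation (\ref{W}) of $W$ together with the $R=R_{1}+R_{2}$ decomposition used just before Lemma \ref{Lema-W} and the identity $X(t,s)X(s,\tau)=X(t,\tau)$, I would decompose $W(t,s)(I_m-P^{B}(s))-X(t,s)R(s)$ into three integrals. After inserting $I=P^{A}(\tau)+(I-P^{A}(\tau))$ inside the integrand of $W(t,s)(I-P^{B}(s))$ and splitting $\int_{0}^{s}=\int_{0}^{t}+\int_{t}^{s}$ in the $R_{2}(s)$ contribution, the $P^{A}C(I-P^{B})$ pieces on the interval $[t,s]$ cancel, leaving
\begin{eqnarray*}
E_{1} &=& \int_{0}^{t} X(t,\tau)P^{A}(\tau)C(\tau)(I-P^{B}(\tau))Y(\tau,s)\,d\tau,\\
E_{2} &=& -\int_{t}^{s} X(t,\tau)(I-P^{A}(\tau))C(\tau)(I-P^{B}(\tau))Y(\tau,s)\,d\tau,\\
E_{3} &=& \int_{s}^{+\infty} X(t,\tau)(I-P^{A}(\tau))C(\tau)P^{B}(\tau)Y(\tau,s)\,d\tau.
\end{eqnarray*}
This cancellation is the bookkeeping step where one has to be careful, and I expect it to be the main obstacle: one must verify that the right ``mixed'' projector combinations survive so that in every $E_{i}$ the $X$-factor and the $Y$-factor admit uniformized dichotomy bounds on the interval of integration.

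For each $E_{i}$ I would apply Lemma \ref{LTNU} with the appropriate choice of $Z\in\{P^{A},I-P^{A}\}$ and $V\in\{P^{B},I-P^{B}\}$, which pulls out the factor $Le^{\theta s}\|C\|_{\tau,\infty}$ and leaves uniform norms $\|X(t,\tau)Z(\tau)\|_{\tau,t}$ and $\|V(\tau)Y(\tau,s)\|_{s,\tau}$. By construction of the three intervals $[0,t]$, $[t,s]$, $[s,\infty)$ these uniform norms are controlled by the uniformized dichotomy bounds following (\ref{DD})--(\ref{DDD}); the resulting elementary integrals yield, respectively, bounds proportional to $e^{-\tilde{\alpha}(s-t)}$, $e^{-\min\{\alpha,\tilde{\alpha}\}(s-t)}$ (with the usual $(s-t)e^{-\alpha(s-t)}$ replaced via $(s-t)e^{-\alpha(s-t)}\leq (\gamma e)^{-1}e^{-(\alpha-\gamma)(s-t)}$ when $\alpha=\tilde{\alpha}$, choosing $\gamma\in(0,\alpha-\theta)$), and $e^{-\alpha(s-t)}$.

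Finally, summing the three estimates and absorbing $L\|C\|_{\tau,\infty}\kappa_{I}^{2}$ together with the combinatorial constants into a single constant gives
$$\|(I_n-P^{A}(t))W(t,s)-R(t)Y(t,s)\|\;\leq\; K_{3}\,e^{-\alpha_{3}(s-t)+\varepsilon_{3}s},$$
with $\varepsilon_{3}=\theta$ and $\alpha_{3}=\min\{\alpha,\tilde{\alpha}\}$ (or $\alpha-\gamma$ in the degenerate case), and the condition $\alpha_{3}>\varepsilon_{3}$ is ensured exactly as in Lemma \ref{Lema-W}. Consistently relabeling $K_{3}$, $\alpha_{3}$, $\varepsilon_{3}$ to match those in (\ref{DENUa1}) then allows the two lemmas to be combined via the same bookkeeping as Lemma \ref{DEP} to furnish the $(I-P(s))$-half of the nonuniform exponential dichotomy for the triangular system (\ref{STB}).
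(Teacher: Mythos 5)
Your proposal is correct and follows essentially the same route as the paper, whose proof of this lemma is precisely a ``carbon copy'' of Lemma \ref{Lema-W}: the same three-integral decomposition over $[0,t]$, $[t,s]$ and $[s,+\infty)$ with the projector combinations $P^{A}CQ^{B}$, $Q^{A}CQ^{B}$ and $Q^{A}CP^{B}$, followed by Lemma \ref{LTNU}, the uniformized dichotomy bounds, and the $(s-t)e^{-\gamma(s-t)}\leq(\gamma e)^{-1}$ trick when $\alpha=\tilde{\alpha}$. Your algebraic entry point via the invariance identity of Lemma \ref{T-INV} is a clean and valid way to arrive at that decomposition, and your exponent bookkeeping ($\varepsilon_{3}=\theta$, $\alpha_{3}=\min\{\alpha,\tilde{\alpha}\}$ or $\alpha-\gamma$) matches the paper's.
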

\begin{proof}
The proof is a charbon copy of the proof of the Lemma \ref{Lema-W} and is left for the reader.
\end{proof}

\begin{lemma}
\label{DEIP}
The evolution operator of (\ref{STB}) and the projector $P(t)$ previously defined verify:
$$
\begin{array}{rcl}
\left \|T(t,s)[I-P(s)]\right \|&\leq&\bar{K}e^{-\bar{\alpha}(s-t)+\bar{\varepsilon}s} \quad \textnormal{for any $s\geq t\geq 0$},
\end{array}
$$
where $\bar{K}\geq1$, $\bar{\alpha}>0$, $\bar{\varepsilon}\geq 0$, with $\bar{\alpha}>\bar{\varepsilon}$.
\end{lemma}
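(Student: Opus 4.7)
The plan is to mirror the argument of Lemma \ref{DEP}, using that the bound (\ref{DENUa2}) plays for $I-P$ the role that (\ref{DENUa1}) played for $P$. The key preliminary observation is that the invariance of $P(\cdot)$ established in Lemma \ref{T-INV} immediately yields $T(t,s)[I-P(s)]=[I-P(t)]T(t,s)$ for all $t,s\geq 0$, so I will work with the right-hand side.

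Writing out the block product, and using the invariance $[I_n-P^{A}(t)]X(t,s)=X(t,s)[I_n-P^{A}(s)]$ and $[I_m-P^{B}(t)]Y(t,s)=Y(t,s)[I_m-P^{B}(s)]$, I obtain
\[
T(t,s)[I-P(s)]=\begin{bmatrix} X(t,s)[I_n-P^{A}(s)] & [I_n-P^{A}(t)]W(t,s)-R(t)Y(t,s) \\ 0 & Y(t,s)[I_m-P^{B}(s)]\end{bmatrix}.
\]
The crucial point is that the off-diagonal block is exactly the quantity whose norm is controlled by (\ref{DENUa2}).

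Now I will take an arbitrary nonzero vector $(\xi_1,\xi_2)\in\mathbb{R}^n\times\mathbb{R}^m$, compute $T(t,s)[I-P(s)](\xi_1,\xi_2)^{\top}$, and apply the triangle inequality to obtain three terms, each of which is bounded by the corresponding operator norm times $|\xi_i|\leq|(\xi_1,\xi_2)|$. For $s\geq t\geq 0$ the first and third terms are controlled by the diagonal dichotomy estimates (\ref{DD}) and (\ref{DDD}) respectively, giving $Ke^{-\alpha(s-t)+\varepsilon s}$ and $\tilde K e^{-\tilde\alpha(s-t)+\tilde\varepsilon s}$, while the middle term is handled by (\ref{DENUa2}), giving $K_3 e^{-\alpha_3(s-t)+\varepsilon_3 s}$. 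Setting $\bar K=3\max\{K,\tilde K,K_3\}$, $\bar\alpha=\min\{\alpha,\tilde\alpha,\alpha_3\}$ and $\bar\varepsilon=\max\{\varepsilon,\tilde\varepsilon,\varepsilon_3\}$ yields the desired estimate; the condition $\bar\alpha>\bar\varepsilon$ is inherited from the corresponding inequalities for each of the three summands already established.

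There is no substantive obstacle here: once Lemma \ref{T-INV} and the preceding dual estimate (\ref{DENUa2}) are in hand, the proof is a direct copy of Lemma \ref{DEP} with the roles of $P$ and $I-P$ (and the corresponding time inequalities $t\geq s$ versus $s\geq t$) interchanged. The only point requiring a little care is using the invariance identity to rewrite $T(t,s)[I-P(s)]$ as $[I-P(t)]T(t,s)$, which is what aligns the off-diagonal block with the expression bounded in (\ref{DENUa2}); without that step the block would be $W(t,s)[I_m-P^{B}(s)]-X(t,s)R(s)$, which has not been estimated directly.
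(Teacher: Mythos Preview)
Your proposal is correct and follows essentially the same approach as the paper: both proofs invoke the invariance $T(t,s)[I-P(s)]=[I-P(t)]T(t,s)$ from Lemma~\ref{T-INV}, write the resulting block matrix so that the off-diagonal entry is precisely the expression controlled by (\ref{DENUa2}), and then bound the action on a nonzero vector $(\xi_1,\xi_2)$ via the triangle inequality together with the diagonal dichotomies (\ref{DD}) and (\ref{DDD}). Your remark that the invariance step is what aligns the off-diagonal block with (\ref{DENUa2}) is exactly the point; note only that the inequality $\bar\alpha>\bar\varepsilon$ is not an automatic consequence of the three individual inequalities but does hold here because $\alpha_3\leq\min\{\alpha,\tilde\alpha\}$ and $\varepsilon_3=\theta\geq\max\{\varepsilon,\tilde\varepsilon\}$, so in fact $\bar\alpha=\alpha_3>\varepsilon_3=\bar\varepsilon$.
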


\begin{proof}
Let us consider $(\xi_1,\xi_2)\in\R^{n}\times\R^{m}\setminus\{(0,0)\}$, then we have that
$$
\begin{array}{rcl}
\left |T(t,s)(I-P(s))\begin{pmatrix}
\xi_1 \\
\xi_2
\end{pmatrix}\right |&=&\left |(I-P(t))T(t,s)\begin{pmatrix}
\xi_1 \\
\xi_2
\end{pmatrix}\right |\\\\
&=&\left |\left[\begin{array}{cc}
Q^{A}(t)X(t,s) & Q^{A}(t)W(t,s)-R(t)Y(t,s) \\
0 & Q^{B}(t)Y(t,s)
\end{array}\right]\begin{pmatrix}
\xi_1 \\
\xi_2
\end{pmatrix}\right |,\\\\
&=&\left |\left[\begin{array}{cc}
Q^{A}(t)X(t,s)\xi_1 + (Q^{A}(t)W(t,s)-R(t)Y(t,s))\xi_2 \\
 Q^{B}(t)Y(t,s)\xi_2
\end{array}\right] \right |\\\\
&\leq&||Q^{A}(t)X(t,s) ||\cdot |\xi_1|+\\\\
&&||Q^{A}(t)W(t,s)-R(t)Y(t,s)||\cdot|\xi_2|+|| Q^{B}(t)Y(t,s)||\cdot|\xi_2|
\end{array}
$$
and the Lemma is a consequence of (\ref{DENUa2}), which  estimates the second summand, combined with the fact that $|\xi_i|\leq |(\xi_1,\xi_2)|$.
\end{proof}

The next result shows that the nondiagonal
submatrix $W(t,s)$ of $T(t,s)$ has a property reminiscent to the half nonuniform bounded growth.

\begin{lemma}
There exist a constant $M_{3}\geq1$, $\omega_{3}>0$ and $\theta\geq0$ such that the operator $W(t,s)$, defined in (\ref{W}), verifies
$$||W(t,s)||\leq M_{3}e^{\omega_{3} (t-s)+\theta s},\quad t\geq s.
$$
\end{lemma}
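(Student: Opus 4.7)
The plan is to estimate $W(t,s)$ directly from its integral definition (\ref{W}), following exactly the same pattern used in Lemma \ref{Lema-W} and in the bound on $R(t)$, but this time without projectors in the middle.

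First, I would establish an analog of Lemma \ref{LTNU} with $Z=I_n$ and $V=I_m$. The proof there does not actually use any dichotomy property of $P^A$, $P^B$; it merely passes from $|\cdot|$ to the parametrized norms via the estimates (\ref{ineg1-BV}), (\ref{cotas-beta})--(\ref{cota-L}), then applies (\ref{UE1})--(\ref{UE3}) together with the hypothesis (\ref{bound-C}) on $C$. Repeating that chain of inequalities verbatim, dropping the projectors, gives for any $\xi\in\R^m$ and $t\geq\tau\geq s\geq 0$,
\begin{equation*}
|X(t,\tau)C(\tau)Y(\tau,s)\xi|\leq L e^{\theta s}\|C\|_{\tau,\infty}\|X(t,\tau)\|_{\tau,t}\,\|Y(\tau,s)\|_{s,\tau}|\xi|.
\end{equation*}

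Next, I would invoke the half uniform bounded growth estimates (\ref{LUPBG}) provided by the Uniformization Lemma, namely $\|X(t,\tau)\|_{\tau,t}\leq \mu e^{\omega(t-\tau)}$ and $\|Y(\tau,s)\|_{s,\tau}\leq \tilde{\mu} e^{\tilde{\omega}(\tau-s)}$. Substituting and taking norms in the definition (\ref{W}) yields
\begin{equation*}
\|W(t,s)\|\leq L\mu\tilde{\mu}\|C\|_{\tau,\infty}\,e^{\theta s}\int_s^{t}e^{\omega(t-\tau)}e^{\tilde{\omega}(\tau-s)}\,d\tau.
\end{equation*}

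The remaining step is to bound the scalar integral in two cases, exactly as done in Lemma \ref{Lema-W}. If $\omega\neq\tilde{\omega}$, direct integration gives
\begin{equation*}
\int_s^{t}e^{\omega(t-\tau)+\tilde{\omega}(\tau-s)}\,d\tau=\frac{e^{\tilde{\omega}(t-s)}-e^{\omega(t-s)}}{\tilde{\omega}-\omega}\leq \frac{2}{|\tilde{\omega}-\omega|}e^{\omega_3(t-s)},
\end{equation*}
where $\omega_3=\max\{\omega,\tilde{\omega}\}$. If $\omega=\tilde{\omega}$, the integral equals $(t-s)e^{\omega(t-s)}$, which we absorb into an exponential using the standard bound $(t-s)e^{-\gamma(t-s)}\leq 1/(\gamma e)$ for any $\gamma>0$, yielding $(t-s)e^{\omega(t-s)}\leq \frac{1}{\gamma e}e^{(\omega+\gamma)(t-s)}$, so here we may take $\omega_3=\omega+\gamma$ for some small $\gamma>0$. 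Setting $M_3$ to be the maximum of $1$ and the resulting prefactors in either case, the conclusion
\begin{equation*}
\|W(t,s)\|\leq M_3 e^{\omega_3(t-s)+\theta s},\qquad t\geq s\geq 0,
\end{equation*}
follows. No step here is truly an obstacle; the only minor care is in the degenerate case $\omega=\tilde{\omega}$, where a polynomial factor must be traded for a slightly larger exponential rate, but this is handled exactly as in the proof of Lemma \ref{Lema-W}.
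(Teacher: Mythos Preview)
Your proposal is correct and follows essentially the same approach as the paper: apply Lemma~\ref{LTNU} with $Z=I_n$, $V=I_m$, then use the uniformized bounded-growth estimates (\ref{LUPBG}) and integrate. The only cosmetic difference is in the degenerate case $\omega=\tilde{\omega}$: the paper simply uses $(t-s)\leq e^{(t-s)}$ to obtain $\omega_2=\omega+1$, whereas you reuse the trick from Lemma~\ref{Lema-W} to get $\omega_3=\omega+\gamma$; both are equally valid.
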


\begin{proof} 
Let us recall that the systems $\dot{x}=A(t)x$ and $\dot{y}=B(t)y$ have a half nonuniform bounded growth on $\R_{0}^{+}$ described in (\ref{HNBG}). By using Lemma \ref{LTNU}, where $Z(\tau)=I_{n}$ and $V(\tau)=I_{m}$, combined with
(\ref{LUPBG}) which arises from the Uniformization Lemma, we can see that when $t\geq s\geq 0$:
$$
\begin{array}{rcl}
||W(t,s)||&\leq&\displaystyle Le^{\theta s}||C||_{\tau,\infty}\int_{s}^{t}||X(t,\tau)||_{\tau,t} ||Y(\tau,s)||_{s,\tau}\; d\tau\\\\
&\leq&Le^{\theta s}||C||_{\tau,\infty}\mu\tilde{\mu} \displaystyle\int_{s}^{t}e^{\omega (t-\tau)}e^{\tilde{\omega}(\tau-s)}\; d\tau.
\end{array}
$$

Here we have two cases. The first one is when $\omega\neq\tilde{\omega}$, then if $\mu_{I}=\max\{\mu,\tilde{\mu}\}$:
$$
\begin{array}{rcl}
||W(t,s)||&\leq&Le^{\theta s}||C||_{\tau,\infty}\mu_{I}^{2} e^{\omega t-\tilde{\omega} s}\displaystyle\int_{s}^{t}e^{(\omega-\tilde{\omega})\tau}\; d\tau\\\\

&=&\displaystyle Le^{\theta s}||C||_{\tau,\infty}\mu_{I}^{2} \frac{1}{|\tilde{\omega}-\omega|}\left [e^{\tilde{\omega} (t-s)}-e^{\omega(t-s)}\right ]\\\\
&\leq&\displaystyle M_1 e^{\omega_1(t-s)+\theta s},
\end{array}
$$
where 
$$M_1=\max\left \{ 1, \displaystyle L||C||_{\tau,\infty}\mu_{I}^{2} \frac{2}{|\tilde{\omega}-\omega|}\right \}\quad \textnormal{and}\quad \omega_{1}=\max\{\omega,\tilde{\omega}\}.$$

The second case is when $\omega=\tilde{\omega}$, then
$$
\begin{array}{rcl}
||W(t,s)||&\leq&Le^{\theta s}||C||_{\tau,\infty}\mu_{I}^{2}\displaystyle\int_{s}^{t}e^{\omega(t-s)}\; d\tau\\\\
&\leq&\displaystyle Le^{\theta s}||C||_{\tau,\infty}\mu_{I}^{2} e^{\omega(t-s)}e^{(t-s)}\\\\
&\leq&\displaystyle M_2 e^{\omega_{2}(t-s)+\theta s},
\end{array}
$$
where
$$M_2=\max \left \{ 1, L||C||_{\tau,\infty}\mu_{I}^{2}  \right \}\quad \textnormal{and}\quad \omega_{2}=\omega+1.$$

Based on the two cases analyzed, we can conclude that 
\begin{equation}
\label{HNBGW}
||W(t,s)||\leq Me^{\omega_{3} (t-s)+\theta s},\quad t\geq s,
\end{equation}
where 
$$M_3=\max \left \{M_1, M_2\right \}\quad \textnormal{and}\quad \omega_{3}=\max\{\omega_1,\omega_2\}.$$
\end{proof}

The last result shows that the evolution operator associated to the upper triangular system (\ref{lin}) has the property of
half $(\overline{M}e^{\overline{\theta}s},\overline{\omega})$--nonuniform bounded growth.

\begin{lemma}
\label{UL1}
The evolution operator of (\ref{STB}) verify:
$$
\begin{array}{rcl}
\left \|T(t,s)\right \|&\leq&\bar{M}e^{\bar{\omega}(t-s)+\bar{\theta}s} \quad \textnormal{for any $t\geq s\geq 0$},
\end{array}
$$
where $\bar{M}\geq1$, $\bar{\omega}>0$, $\bar{\theta}\geq 0$.
\end{lemma}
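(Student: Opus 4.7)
The plan is to exploit the upper block triangular structure of $T(t,s)$ together with the three estimates already available: the half nonuniform bounded growth of $X(t,s)$ and $Y(t,s)$ stated in (\ref{HNBG}), and the estimate (\ref{HNBGW}) for the off-diagonal block $W(t,s)$ established just above. The argument then parallels the calculations performed in Lemmas \ref{DEP} and \ref{DEIP}, but without any need for projector splittings, so it should be essentially routine bookkeeping.

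Concretely, fix $(\xi_1,\xi_2) \in \R^{n}\times \R^{m}\setminus\{(0,0)\}$ and $t\geq s\geq 0$. Writing out
$$
T(t,s)\begin{pmatrix}\xi_1\\ \xi_2\end{pmatrix}
=\begin{pmatrix} X(t,s)\xi_1+W(t,s)\xi_2 \\ Y(t,s)\xi_2\end{pmatrix},
$$
the triangle inequality together with $|\xi_i|\leq |(\xi_1,\xi_2)|$ yields
$$
\left|T(t,s)\begin{pmatrix}\xi_1\\ \xi_2\end{pmatrix}\right| \leq \bigl(\|X(t,s)\|+\|W(t,s)\|+\|Y(t,s)\|\bigr)\,\bigl|(\xi_1,\xi_2)\bigr|.
$$
Substituting the bounds $\|X(t,s)\|\leq Me^{\omega(t-s)+\ell s}$, $\|Y(t,s)\|\leq \tilde M e^{\tilde\omega(t-s)+\tilde\ell s}$ and $\|W(t,s)\|\leq M_3 e^{\omega_3(t-s)+\theta s}$, one absorbs the three exponentials into a single upper bound by taking the maximum of the growth rates and summing the constants: set $\bar\omega := \max\{\omega,\tilde\omega,\omega_3\}$, $\bar\theta := \max\{\ell,\tilde\ell,\theta\}$, and $\bar M := M+\tilde M+M_3$. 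Since $t\geq s\geq 0$, all exponentials with smaller exponents are dominated by $e^{\bar\omega(t-s)+\bar\theta s}$, and the desired inequality $\|T(t,s)\|\leq \bar M e^{\bar\omega(t-s)+\bar\theta s}$ follows.

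I do not expect any genuine obstacle here: the diagonal bounds are assumptions (i) of Theorem \ref{vietnam} combined with the Uniformization Lemma machinery, and the off-diagonal bound was just verified in the immediately preceding lemma. The only care needed is to make sure the constants $\bar M,\bar\omega,\bar\theta$ are chosen once and for all independently of $(t,s)$, which is automatic because $\omega,\tilde\omega,\omega_3,\ell,\tilde\ell,\theta,M,\tilde M,M_3$ are all constants determined by the hypotheses of the theorem. In particular, $\bar\theta$ inherits the value $\max\{\ell,\tilde\ell,\varepsilon,\tilde\varepsilon\}$ already fixed in (\ref{cota-L}), consistent with the parameter appearing in the dichotomy estimates obtained in Lemmas \ref{DEP} and \ref{DEIP}.
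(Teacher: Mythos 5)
Your proof is correct and follows essentially the same route as the paper: block-multiply $T(t,s)$ against $(\xi_1,\xi_2)$, apply the triangle inequality with $|\xi_i|\leq|(\xi_1,\xi_2)|$, and combine the bounds (\ref{HNBG}) for $X$, $Y$ with (\ref{HNBGW}) for $W$. Your explicit choice of $\bar M$, $\bar\omega$, $\bar\theta$ is a harmless refinement of what the paper leaves implicit.
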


\begin{proof}
If we consider $(\xi_1,\xi_2)\in\R^{n}\times\R^{m}\setminus\{(0,0)\}$, then we have that
$$
\begin{array}{rcl}
\left |T(t,s)\begin{pmatrix}
\xi_1 \\
\xi_2
\end{pmatrix}\right |&=&\left |\left[\begin{array}{cc}
X(t,s) & W(t,s) \\
0 & Y(t,s)
\end{array}\right]\begin{pmatrix}
\xi_1 \\
\xi_2
\end{pmatrix}\right |,\\\\
&=&\left |\left[\begin{array}{cc}
X(t,s)\xi_1 + W(t,s)\xi_2 \\
 Y(t,s)\xi_2
\end{array}\right] \right |\\\\
&\leq&||X(t,s) ||\cdot |\xi_1|+\\\\
&&||W(t,s)||\cdot|\xi_2|+|| Y(t,s)||\cdot|\xi_2|
\end{array}
$$
and due to the estimation (\ref{HNBGW}), the fact that $|\xi_i|\leq |(\xi_1,\xi_2)|$ and both estimations in (\ref{HNBG}), we can ensure that for $t\geq s$:
$$||T(t,s)||\leq \bar{M}e^{\bar{\theta} s}e^{\bar{\omega}(t-s)}.$$
\end{proof}

\subsection{End of proof of Theorem \ref{vietnam}}

Firstly, the Lemmas \ref{T-INV}, \ref{DEP} and \ref{DEIP} imply that the triangular system (\ref{STB}) has a $(\bar{K}e^{\bar{\varepsilon}s},\bar{\alpha})$--nonuniform exponential dichotomy in $\mathbb{R}_{0}^{+}$. 

Secondly, the Lemma \ref{UL1} says that the system (\ref{STB}) has the property of half $(\bar{M}e^{\bar{\theta}s},\bar{ \omega})$--nonuniform bounded growth and the Theorem follows.

\begin{remark}
A meticulous reading of this Appendix shows that the property of half nonuniform bounded growth 
is fundamental in several steps of the proof: 
\begin{itemize}
\item[a)] Is a necessary condition in order to use the Uniformization Lemma, which ensures the existence of a continuous family norms $\{|\cdot|_{t}\}$ verifying the inequality $|x|_{t}\leq L_{2}(t)|x|$. The half nonuniform bounded growth property is a required tool to obtain explicit estimations for $L_{2}(\cdot)$ and (\ref{cotas-beta}).
\item[b)] The constants $\ell$ and $\tilde{\ell}$ are necessary to deduce (\ref{cotas-beta}) and (\ref{cota-L}), these identities are immersed in Lemma \ref{LTNU}, which is the main key to deduce several estimations around the proof.
\item[c)] The previous facts, also shows that the boundedness properties of $||C(\tau)||_{\tau,\tau}$ involves estimations based in the half nonuniform bounded growth property.
\end{itemize}

We point out that in \cite{Tien}, the property of half nonuniform bounded growth is not considered neither in the statement of Uniformization Lemma (Lemma 2.2 in \cite{Tien}) nor in the statement of Theorem 2.3.

\end{remark}



\end{document}